\def\Inv{\operatorname{Inv}}
\def\Meas{\operatorname{Meas}}
\newtheorem{theorem}{\noindent Theorem}
\newtheorem{lemma}{\noindent Lemma}
\newtheorem{definition}{\noindent Definition}
\newtheorem{corollary}{\noindent Corollary}
\newtheorem{statement}{\noindent Proposition}
\newtheorem{remark}{\noindent Remark}
\newtheorem{problem}{\noindent Problem}
\newtheorem{conclusion}{\noindent Conclusion}
\vershik\url{vershik@pdmi.ras.ru}
\title{The problem of describing central measures on the path spaces of graded graphs}
\author{A.~M.~Vershik\thanks{%
St.~Petersburg Department of Steklov Institute of Mathematics and St.~Petersburg State University.
E-mail: \vershik. Supported by the Russian Science Foundation grant 14-11-00581.}}
\date{07.08.14}
\begin{document}

  \maketitle

\begin{abstract}
We suggest a new method of describing invariant measures on Markov compacta and path spaces of graphs, and thus of describing characters of some groups and traces of AF-algebras. The method relies on properties of filtrations associated with the graph and, in particular, on the notion of a standard filtration. The main tool is the so-called internal metric that we introduce on simplices of measures; it is an iterated Kantorovich metric, and the key result is that the relative compactness in this metric guarantees a constructive enumeration of the ergodic invariant measures. Applications include a number of classical theorems on invariant measures.
 \end{abstract}

 \tableofcontents

\section{Introduction}
The goal of this paper is an exact formulation of a new method of finding ergodic measures on a Markov compactum or on the path space of  a Bratteli diagram invariant with respect to the tail equivalence relation. The method presupposes a certain uniform compactness with respect to the metric we introduce, and if this condition is satisfied, then the complete list of ergodic invariant measures is parametrized by the points of the compactification. The ergodicity of a measure in this case means that it satisfies the 0--1 law with respect to the tail filtration. The suggested method is an elaboration and strengthening of the ergodic method for finding the ergodic invariant measures, which is based on the ergodic theorem or the theorem on convergence of martingales and does not require other conditions. As a result, it can be applied to any approximation of invariant measures. But it is this fact that causes serious difficulties when calculating finite-dimensional distributions. The main difficulty in applying the ergodic method is not so much in calculations as in the proof that the obtained list is complete. But it turns out that in many natural situations, such as various generalizations of de Finetti's theorem, the theorem on the characters of the infinite symmetric group, etc., the natural approximation of the tail partition has much stronger properties than just convergence everywhere or in measure. Namely, the tail filtration turns out to be {\it standard} in the sense of the theory of filtrations, which means, in particular, that the approximation converges in a very strong sense. In other words, this can be expressed as follows: in many examples invariant measures possess latent generalizations of the independence property, and the standardness exactly takes this  adequately into account, being such a property itself. In fact, the present paper is a result of applying the theory of filtrations, on which the author has been working since the 1960s, to problems close to the representation theory of locally finite groups, their characters and combinatorics. Nevertheless, the presentation below is self-contained and does not require using other papers. Here we do not consider the most general graphs, restricting ourselves to what is needed for the most important examples. A more general class of examples, requiring additional definitions, will be considered in a paper which is now in preparation. We only note that the theory of filtrations allows one to look at finite approximations of invariant measures ``from infinity,'' i.\,e., from the viewpoint of the {\it decreasing sequence of the tail $\sigma$-fields on the path space, rather than that of the increasing sequence of the finite $\sigma$-fields of their beginnings}.

In Section~2 we give all necessary definitions and a survey of relevant known facts. The set of invariant (central) measures is a projective limit of simplices, and its vertices (the Choquet boundary) are the ergodic measures. That is why we pay attention to the geometry of projective limits, which is also useful in other problems. The main idea of the paper is described in Section~3; first we define the Kantorovich metric on a simplex in an elementary geometric form, and then define the so-called internal metric on a projective limit of simplices. This notion allows us to divide all ergodic measures into two classes: standard and nonstandard measures.\footnote{In the preliminary papers \cite{V13,V14} we used the term ``smooth'' and ``nonsmooth'' measures, graphs, etc., following the tradition of the theory of  $C^*$-algebras, but the terminology suggested here seems to be more appropriate.} The main theorem is given in Section~4: if the levels (pre-limit simplices) satisfy the uniform compactness condition, then the whole Choquet boundary consists of the standard ergodic measures and can be found as the completion of the pre-limit boundaries with respect to the internal metric; in other words, we obtain a parametrization of the set of ergodic measures and an interpretation of the parameters. Of course, the generality of this approach gives no hope for exact formulas for the distributions; they can be (and were) obtained using the specific features (analytic or probabilistic) in concrete cases. And though one can hardly hope for this in the general case, a revelation of the meaning of the parameters and a natural metric on them, and, above all, the interpretation of their independence, opens a new level in the understanding of problems concerning invariant measures. In this paper, for space considerations, we mention examples only briefly in Section~5, meaning to treat them in another paper.

\begin{figure}
\centering
\includegraphics[height=6cm]{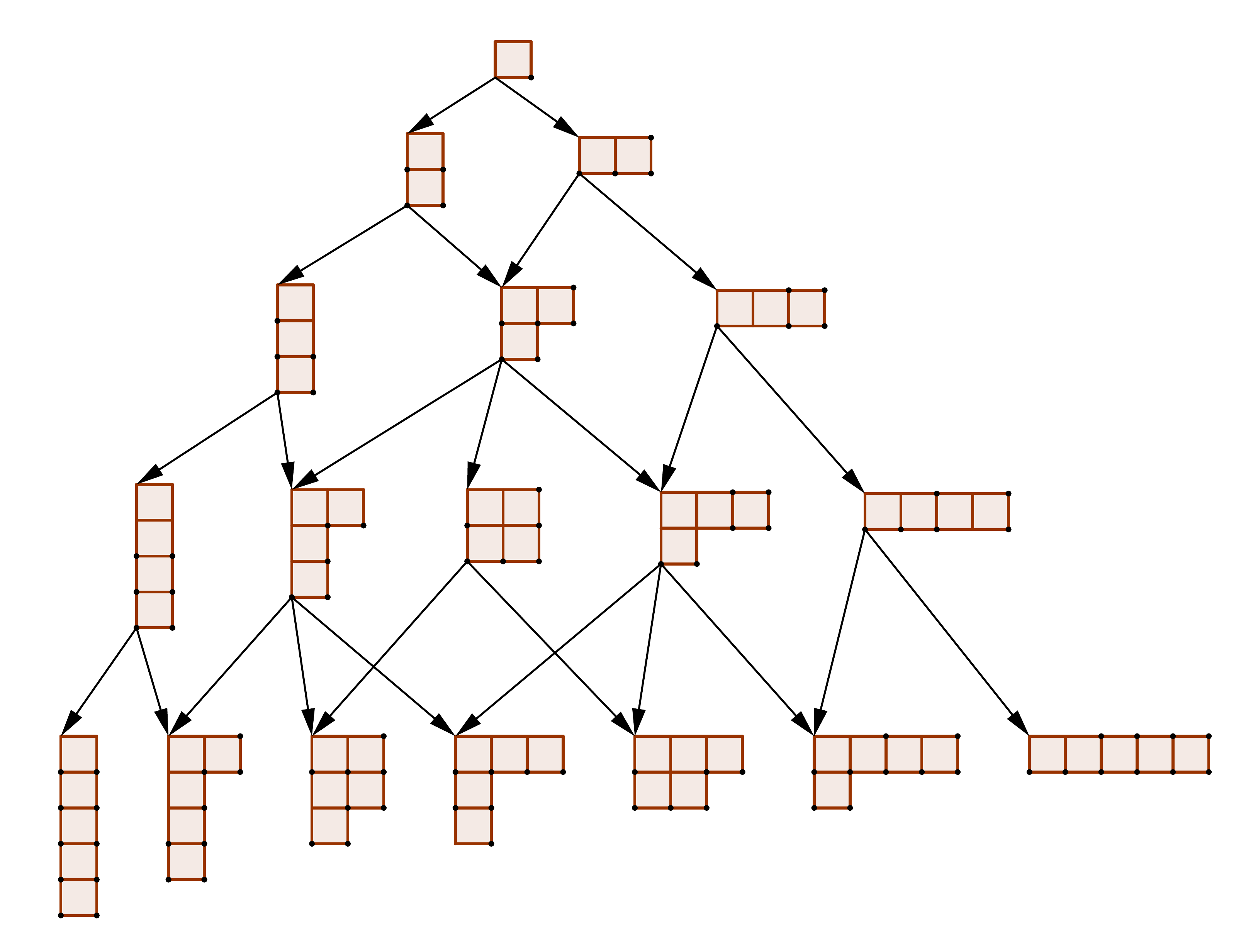}
\caption{The Young graph.}
\end{figure}

\begin{figure}
\centering
\includegraphics[height=6cm]{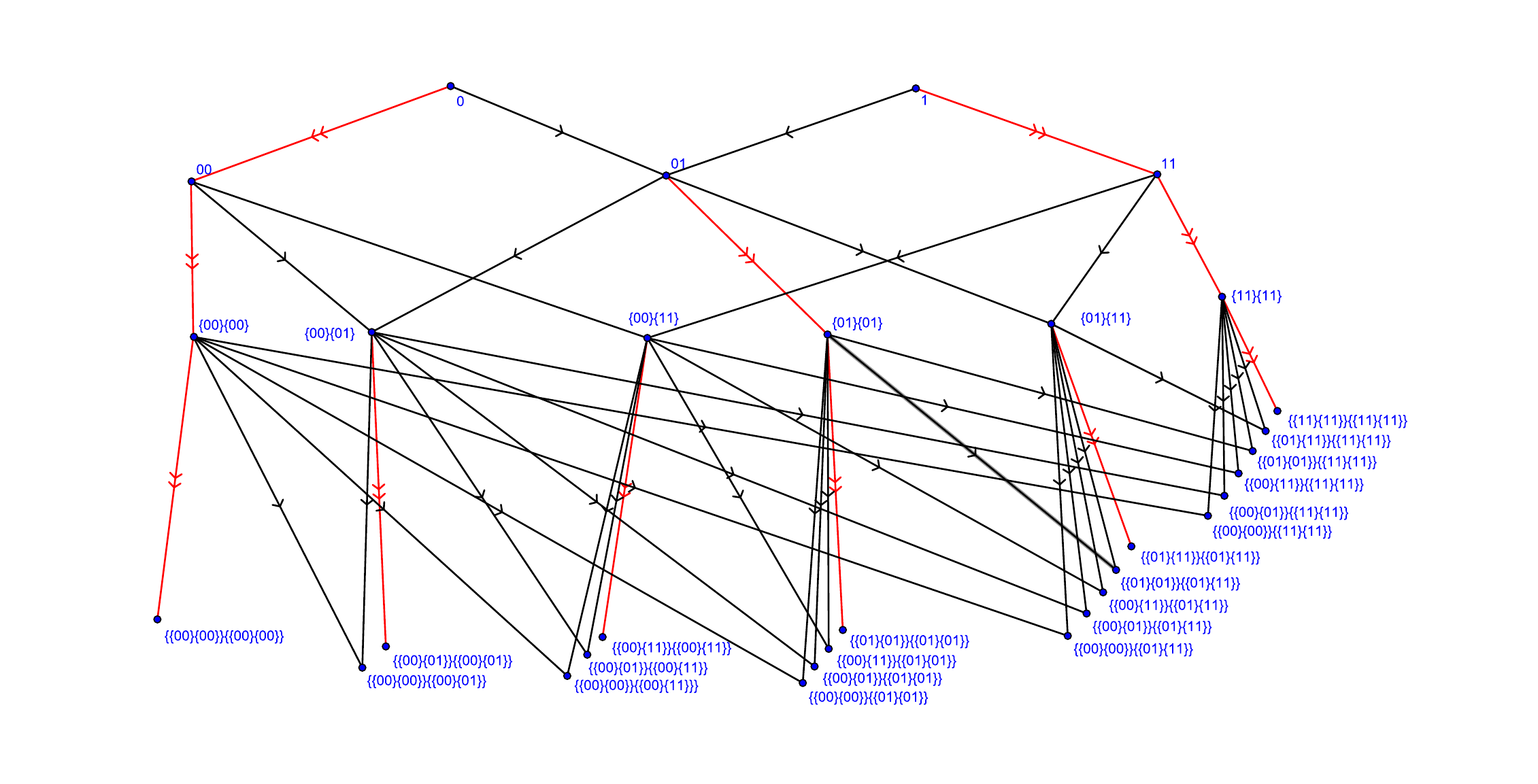}
\caption{The graph of unordered pairs.}
\end{figure}

In conclusion, we comment on the two figures. The first one shows the Young graph, i.\,e.,  the branching graph of irreducible representations of the infinite group of finite permutations. This graph is standard in the sense of this paper: all ergodic measures on its path space are standard, and they comprise the list of indecomposable characters of this group. But proving this and obtaining the complete list of measures is far from easy. First it was found analytically by E.~Thoma \cite{T}. Later, in  \cite{V-K}, the ergodic method idea was implemented, and the same list was obtained together with an explanation of the combinatorial and representation-theoretic meaning of the parameters, which was absent in \cite{T}; even more later, in  \cite{O} (see also \cite{Ol}), this result was obtained by methods which could be called operator-theoretic. The method of the present paper is intended to give the first purely combinatorial proof of the theorem on invariant measures (characters) that would use neither analytic nor algebraic techniques, but rely only on the combinatorics of the Young graph. The main point is that the completeness of the list, i.\,e., the proof that the frequencies uniquely determine an ergodic measure, a priori follows from the compactness theorem.
And the second graph is the graph of unordered pairs (the vertices of each level are all possible unordered pairs of vertices of the previous level), which is an example of a nonstandard graph. It illustrates in a simplest form the idea of a so-called ``tower of measures'' introduced in
\cite{94}.
Various problems concerning invariant measures for tame partitions (i.\,e., monotone limits of finite partitions) can be reduced to problems on graphs. Examples of nonstandard graphs often arise in  dynamical systems, in the theory of growth of random configurations, in statistical physics models,  etc.

\section{Basic notions}

\subsection{A Markov compactum and the path space of a graded graph}

We will consider the problem of describing the probability measures invariant with respect to an equivalence relation on the space of trajectories of a topological Markov compactum or on the space of paths of a graded graph. The equivalence relation will be given via an approximation by finite equivalence relations.

The basic space where the measures will be defined is a one-sided topological Markov compactum
${\cal X}$. Such a compactum is determined by the collection of state spaces, i.\,e., by a family of finite sets
$\{X_n\}$, $n=0,1, \dots$;  $X_0=\{\emptyset\}$, and a family of  $0-1$ matrices $M_n=\{\epsilon_{i,j}\}$, $i\in X_{n-1}$, $j\in X_n$, $n=1,2, \dots$, of orders $|X_{n-1}|\times |X_n|$, $n=1,2, \dots$, without zero rows and columns.
Elements of the Markov compactum ${\cal X}$ are arbitrary infinite sequences (trajectories)
$\{x_n\}_{n=0,1, \dots}$ of elements of the finite sets
$X_n$, $x_n \in X_n$, $n=1,2, \dots$, satisfying the admissibility condition which says exactly that every pair of
neighboring elements
 $x_{n-1}, x_n$, $n=1,2, \dots$,  satisfies $\epsilon_{x_{n-1},x_n}=1$.
 The topology and the $\sigma$-field of Borel sets in the space
${\cal X}=\{\{x_n\}_{n=0,1, \dots}\}$ are defined in a natural way, since
 $\cal X$ is, obviously, the inverse (projective) limit of the sequence of the finite spaces
${\cal X}_n$ that consist of the trajectories of length $n$ satisfying the same condition with respect to the projections of ``forgetting the last coordinate'': ${\cal X}_n \rightarrow {\cal X}_{n-1}$.

Clearly, the above data $\{X_n; M_n\}_{n\geq 0}$ determines also another object: an {\it infinite, locally finite,
$\Bbb N$-graded graph $\Gamma$}, i.\,e., a so-called Bratteli diagram. The Markov compactum defined above can be identified with the space of infinite paths in this graph, which will be denoted by
 $T(\Gamma) \sim \cal X$. Bratteli diagrams and their combinatorics are, in turn, very closely related to the theory of locally semisimple algebras and AF-algebras, but we will not enter into this here.

Thus we can interpret all that follows not only in terms of Markov compacta, which are natural for probabilistic statements, but also in terms of graded graphs $\Gamma$, their levels $\Gamma_n \sim X_n$, $n=0,1, \dots$, their vertices $ x_n \sim \gamma_n$, finite or infinite paths $\{t_n=x_n\}$, etc.
Since these two languages coincide almost tautologically (it suffices to rotate the picture of the graph by $90^{\circ}$ to obtain the scheme of the Markov chain), in what follows we use both languages simultaneously without fear of inconsistencies.\footnote{There is a trend in terminology, originated in the theory of the Young graph, in which vertices of the graph
$\Gamma$, i.\,e., states of the Markov chain, are called diagrams, paths are called tableaux, etc.; here we avoid this.} The language of graphs is used mainly for the combinatorial analysis of applications, which have been in fact the main source of the problems under discussion for the author. Moreover, it turned out that the arising examples, coming from the theory of Bratteli diagrams, are poorly studied  in the framework of Markov compacta, dynamical systems, and random processes; and, vice versa, useful constructions from the theory of dynamical systems only now start finding adequate applications in the algebraic theory.

\begin{remark}{\rm
It is convenient to adopt the following convention: no two distinct vertices of the same level of the graph (respectively, no two elements of the state space of the Markov compactum) have the same set of preceding vertices (i.\,e., the matrices $M_{n}$ have no identical rows). In what follows, this condition will ensure that the internal metric is nondegenerate. It is not too restrictive, since if there are such vertices, then we can replace them by a single vertex and change the multiplicities of edges by multiplying them with the number of merged vertices; applying this procedure to all vertices, we obtain a new graph satisfying the required condition.}
\end{remark}

The most important notions for what follows are the tail equivalence relation on a Markov compactum, or on the path space of a graph, and the tail filtrations on these spaces. {\it Two paths, or two sequences of elements in a Markov compactum, are called $n$-cofinal (equivalent) if they coincide from the $n$th level}; the $n$-cofinality equivalence classes are finite. The measurable (defined as the partition into the preimages of a Borel-measurable function) partition of the path space into these classes will be denoted by $\xi_n$. {\it Two paths are called cofinal if they are $n$-cofinal for some $n$.} The cofinality classes determine an equivalence relation which will be called the tail equivalence relation and denoted by  $\xi(\cal X)$ or $\xi(\Gamma)$.
In general, the partition into the cofinality classes, i.\,e., the intersection of the $n$-cofinality partitions
 $\xi_n$ over all $n$, is not measurable, but it is hyperfinite, i.\,e., by definition, is a limit of Borel equivalence relations with finite equivalence classes. The $\sigma$-field of $\xi_{n}$-measurable sets (i.\,e., sets that along with a given path contain all paths $\xi_{n}$-equivalent to it) will be denoted by ${\frak A}_n$. The $\sigma$-fields ${\frak A}_n$ decrease with $n$,
the first of them coinciding with the whole original $\sigma$-field: ${\frak A}_0=\frak A$. {\it Decreasing sequences of $\sigma$-fields in measure spaces or topological spaces are called filtrations; the above sequence
 $\{{\frak A}_n\}$ will be called the tail filtration on the path space of the graph or on the Markov compactum.} These notions and their properties, which have been studied since the 1970s (see \cite{V71, 94}),  play an important role in this paper.
\subsection{Additional structures on path spaces}

On the objects described above one can define important additional structures. We will consider them later, and now give only a short description.

\medskip
{\bf1.} The finite set of edges leading to a given vertex can be endowed with a linear ordering; if it is chosen for all vertices, this allows one to define a lexicographic ordering on the classes of cofinal paths, and then introduce a partial transformation of paths that sends a given path to the next one in the  lexicographic ordering, the so-called {\it adic shift}.
Here we will not use this notion; the only useful observation is that the tail equivalence relation is nothing but the trajectory partition for the adic shift.

\medskip
{\bf2.} Another additional structure is the cocycle of cotransition probabilities: with each vertex
 $\gamma_n\in \Gamma$ (or coordinate $x_n\in X_n$) one associates the probability vector
 $\{\lambda_{\gamma_n}^{\gamma_{n-1}}\}$ where $\gamma_{n-1}$ ranges over all vertices from
  $\Gamma_{n-1}$ that are connected with $\gamma_n$. This vector is interpreted as the vector of cotransition probabilities of the Markov chain, i.\,e., as  $\mbox{Prob}\{x_{n-1}|x_n\}$.

In this paper we will consider this structure only in the basic special case canonically associated with the graph and the Markov compactum. Namely,
{\it the cotransition probability   $\lambda_{\gamma_n}^{\gamma_{n-1}}$  (for a pair $(\gamma_{n-1} ,\gamma_n)$) is proportional to the number of paths leading to $\gamma_{n-1}$ from the initial vertex}.
Measures with such cotransition probabilities are called central (invariant), see below. However, the method suggested in this paper applies without changes to the case of general cotransition probabilities (i.\,e., to the theory of quasi-invariant measures). The number of paths leading from
 $\emptyset$ to a vertex $\gamma$ is usually denoted by $\dim \gamma$ (since this is the dimension of a certain module).

\medskip
{\bf3.} The matrices $M_n$ determining a Markov compactum may have nonnegative integer entries, which corresponds to the case of a multigraph (a graph with multiple edges); in this case, a path is a sequence of edges rather than vertices.
All  main theorems remain valid in this case. One may consider an even more general context, where multiplicities are nonnegative reals corresponding to weights of edges and paths.

\medskip
Note that all additional structures introduced above can be rephrased in terms of generalized matrices
 $M_n$ and interpreted in terms of locally semisimple algebras and AF-algebras associated with Bratteli diagrams; we will not enter into these questions here.

\subsection{Setting of the problem on invariant measures}

We turn to describing the main problem.

Assume that we are given a Markov compactum   $\cal X$ (or the path space $T(\Gamma)$ of a Bratteli diagram); the set  $\mbox{Meas}(\cal X)$ of all Borel probability measures on $\cal X$ is an affine compact (in the weak topology) simplex, whose extreme points are delta measures.
Since $\cal X$ is an inverse (projective) limit of finite spaces (namely, the spaces of finite paths), it obviously follows that
$\mbox{Meas}(\cal X)$ is also an inverse limit of finite-dimensional simplices
 $\hat \Sigma_n$, where $\hat \Sigma_n$ is the set of formal convex combinations of finite paths (or just the set of probability measures on these paths) leading from the initial vertex to vertices of level $n$, $n=1,2, \dots $, and the projections $\hat\pi_n: \hat\Sigma_n \to \hat \Sigma_{n-1}$ correspond to ``forgetting'' the last vertex of a path. Every measure is determined by its finite-dimensional projections to cylinder sets (i.\,e., is a so-called cylinder measure). We will be interested only in invariant (central) measures, which form a subset of $\mbox{Meas}(\cal X)$.

\begin{definition}
A Borel probability measure $\mu$ on a Markov compactum (= on the path space of a graph) is called central if for any vertex of an arbitrary level, the projection of this measure to the subfield of cylinder sets of finite paths ending at this vertex is the uniform measure on this (finite) set of paths.
\end{definition}

Other, equivalent, definitions of a central measure
 $\mu \in  \mbox{Meas}(\cal X)$ are as follows.

\medskip
{\bf1.} The conditional measure of $\mu$ obtained by fixing the ``tail'' of infinite paths passing through a given vertex, i.\,e., the conditional measure of $\mu$ on the elements of the partition  $\xi_n$, is the uniform measure on the initial segments of paths for any vertex.

\medskip
{\bf2.} The measure is invariant under any adic shift (for any choice of orderings on the edges).

\medskip
{\bf3.} The measure is invariant with respect to the tail equivalence relation.
\medskip

The term ``central measure'' stems from the fact that in the application to representation theory of algebras and groups, measures with these properties determine traces on algebras (respectively, characters on groups). In the theory of stationary (homogeneous) topological Markov chains, central measures are called measures of maximal entropy.

The set of central measures on a Markov compactum $\cal X$ (on the path space $T(\Gamma)$ of  a graph $\Gamma$) will be denoted by $\Inv({\cal X})$ or $\Inv (\Gamma)$.
Clearly, the central measures form a convex weakly closed subset of the simplex of all measures:
$\Inv({\cal X}) \subset \Meas(\cal X)$. The set  $\Inv({\cal X})$ of central measures is also a simplex, which can be naturally presented as a projective limit of the sequence of finite-dimensional simplices of convex combinations of uniform measures on the $n$-cofinality classes. In more detail:

\begin{statement}
The simplex of central measures can be written in the form
$$\Inv({\cal X})=\lim_{\leftarrow}(\Sigma_n; p_{n,m}),$$
or
$$ \Sigma_1 \leftarrow \Sigma_2 \leftarrow \dots\leftarrow \Sigma_n\leftarrow \Sigma_{n+1}\leftarrow
\dots  \leftarrow\Sigma_{\infty}\equiv \Inv({\cal X}),$$
where $\Sigma_n$ is the simplex of formal convex combinations of vertices of the $n$th level
$\Gamma_n$ (i.\,e., points of $X_n$), and the projection
$p_{n,n-1}: \Sigma_n \to \Sigma_{n-1}$ sends a vertex $\gamma_n \in \Gamma_n$ to the convex combination
$\sum \lambda_{\gamma_{n-1}}^{\gamma_n} \delta_{\gamma_{n-1}}\in \Sigma_{n-1}$ where the numbers $\lambda_{\gamma_{n-1}}^{\gamma_n}$ are uniquely determined by the condition that $\lambda_{\gamma_{n-1}}^{\gamma_n}$ is proportional to the number of paths leading from
$\emptyset$ to $\gamma_{n-1}$ (which is denoted, as already mentioned, by
$\dim\gamma_{n-1}$).\footnote{In the general (noncentral) case, the coefficients  $\lambda$ are the cotransition probabilities (see above).}  The general form of the projection is $p_{n,m}=\prod_{i=m}^{n+1} p_{i,i-1}$, $m>n$.
\end{statement}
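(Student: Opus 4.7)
The plan is to establish the claimed presentation of $\Inv({\cal X})$ by exhibiting a mutual inverse pair of continuous affine maps between $\Inv({\cal X})$ and the projective limit $\varprojlim(\Sigma_n; p_{n,m})$, and then to verify that the connecting maps are exactly the $p_{n,n-1}$ described. Throughout I will identify a point of $\Sigma_n$ with a probability distribution on the vertex set $X_n=\Gamma_n$.

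First I would define the forward map $\Phi:\Inv({\cal X})\to\varprojlim\Sigma_n$ by pushing forward a central measure $\mu$ under the coordinate projection $\pi_n:{\cal X}\to X_n$: set $\mu_n(\gamma_n):=\mu(\{x:x_n=\gamma_n\})$. Since $\pi_{n-1}$ factors through $\pi_n$, we automatically have $\mu_{n-1}(\gamma_{n-1})=\sum_{\gamma_n}\mu(\{x_{n-1}=\gamma_{n-1},\,x_n=\gamma_n\})$. Centrality lets me compute the joint probability: the cylinder $\{x_n=\gamma_n\}$ decomposes into the finite union of cylinders over the $\dim\gamma_n$ paths ending at $\gamma_n$, each of equal mass, and the number of such paths that pass through $\gamma_{n-1}$ equals $\dim\gamma_{n-1}$ (times the edge multiplicity, in the multigraph case). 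Consequently
\[
 \mu(\{x_{n-1}=\gamma_{n-1},x_n=\gamma_n\})=\mu_n(\gamma_n)\cdot\frac{\dim\gamma_{n-1}}{\dim\gamma_n},
\]
which, summed over $\gamma_n$, is precisely $(p_{n,n-1}\mu_n)(\gamma_{n-1})$. Hence the sequence $(\mu_n)$ is compatible with the transition maps $p_{n,n-1}$ stated in the proposition, so $\Phi$ lands in the projective limit, and it is clearly affine and weakly continuous.

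Next I would construct the inverse $\Psi$. Given a compatible sequence $(\mu_n)\in\varprojlim\Sigma_n$, assign to every finite path $(x_0,\dots,x_n)$ the mass $\mu_n(x_n)/\dim x_n$; extended by $\sigma$-additivity this defines a probability measure on the $\sigma$-field ${\cal F}_n$ of level-$n$ cylinders. The compatibility $\mu_{n-1}=p_{n,n-1}\mu_n$ is exactly what is needed to make these finite-dimensional distributions consistent under the forgetful maps ${\cal X}_n\to{\cal X}_{n-1}$, so Kolmogorov's extension theorem (equivalently, the universal property of the projective limit ${\cal X}=\varprojlim{\cal X}_n$ applied to Borel probability measures on compact metrizable spaces) yields a unique Borel probability measure $\mu=\Psi((\mu_n))$ on ${\cal X}$. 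By construction $\mu$ assigns equal mass to all paths ending at a common $\gamma_n$, so $\mu$ is central; and it is immediate that $\Phi\circ\Psi$ and $\Psi\circ\Phi$ are the identity.

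The main obstacle is purely bookkeeping: matching the conventional normalization of $\lambda_{\gamma_{n-1}}^{\gamma_n}$ (``proportional to $\dim\gamma_{n-1}$,'' with proportionality constant determined so the vector is a probability vector, namely $1/\dim\gamma_n$ in the simple case and a multiplicity-weighted variant in the multigraph case) with the combinatorial count of paths through an intermediate vertex, and then checking that the consistency required by Kolmogorov extension is \emph{exactly} the identity $\mu_{n-1}=p_{n,n-1}\mu_n$ and nothing more. Once these identifications are made, the bijection is formal, and passing from the set-theoretic statement to an isomorphism of affine compact (Choquet) simplices requires only the remark that $\Phi$ is a continuous affine bijection between compact convex sets.
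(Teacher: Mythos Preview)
Your proof is correct and follows essentially the same approach as the paper's: both arguments identify a central measure with the compatible sequence of its one-level marginals and use the fact that a cylinder measure is determined by (and can be reconstructed from) these projections. Your version is simply more explicit---you compute the coefficients $\lambda_{\gamma_{n-1}}^{\gamma_n}=\dim\gamma_{n-1}/\dim\gamma_n$ and invoke Kolmogorov extension for the inverse---whereas the paper gives a terse sketch that routes through the ambient projective-limit description of $\Meas({\cal X})$ and then restricts to the invariant subsimplex.
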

\begin{proof}
The set of all Borel probability measures on the path space is a simplex which is a projective limit of the simplices generated by the spaces of finite paths of length $n$ in the graph, which follows from the fact that the path space itself is a projective limit with the obvious projections of ``forgetting'' the last edge of a path. The space of invariant measures is thus a weakly closed subset of this simplex, and we will show that it is also a projective limit of simplices (the fact that it is a simplex is well  known, see, e.g., \cite{F}). The projection $\mu_n$ of any invariant measure  $\mu$ to a finite cylinder of level $n$ is a measure invariant under changes of initial segments of paths and hence lies in the simplex defined above; since the projections preserve this invariance, $\{\mu_n\}$ is a point of the projective limit. It remains to observe that a measure is uniquely determined by its projections, which establishes a bijection between the points of the projective limit and the set $\Inv(\Gamma)$ of invariant measures.
\end{proof}

Recall that points of the simplex $\Sigma_n$ are probability measures on the points of $X_{n}$ (i.e., on the vertices of the $n$th level $\Gamma_{n}$), and the extreme points of $\Sigma_n$ are exactly these vertices. Remark~1 means that distinct vertices of the graph correspond to distinct vertices of the simplex.

Extreme points of the simplex $\Inv(\Gamma)$  of invariant measures on the whole path space $T(\Gamma)$
are indecomposable invariant measures, i.\,e., measures that cannot be written as nontrivial convex combinations of other invariant measures. Then it follows from the theorem on the decomposition of measures invariant with respect to a hyperfinite equivalence relation into ergodic components that an indecomposable measure is ergodic ( = there are no invariant subsets of intermediate measure). It is these measures that are of most interest to us, since the other measures are their convex combinations, possibly continual. The set of ergodic central measures of a Markov compactum
 $\cal X$ (of a graph~$\Gamma$) will be denoted by $\mbox{Erg}(\cal X)$ or $\mbox{Erg}(\Gamma)$.

\begin{problem}
Describe all central ergodic measures for a given Markov compactum (respectively, all indecomposable central measures for a given graph). A meaningful question is for what Markov compacta or graphs the set of ergodic central measures has an analytic description in terms of combinatorial characteristics of this compactum or graph, and what are these characteristics; and in which cases such a description does not exist. The role of such characteristics may be played by some properties of the sequence of matrices
 $\{M_n\}$ determining the compactum (graph), frequencies, spectra, etc.
\end{problem}

This problem includes those of describing unitary factor representations of finite type of discrete locally finite groups, finite traces of some $C^*$-algebras, Dynkin's entrance and exit boundaries (see \cite{Dy}); it is very closely related to the problems of finding Martin boundaries, Poisson--Furstenberg boundaries, etc. The answer to the question stated in Problem~1 may be either ``tame'' (there exists a Borel parametrization of the ergodic measures or the factor representations of finite type) or ``wild'' (such a parametrization does not exist). As is well known since the 1950s, in the representation theory such is the state of affairs in the theory of irreducible representations of groups and algebras. However, this also happens, though more rarely, in the theory of factor representations. But in many classical situations the answer in ``tame,'' which is a priori far from obvious.

For example, the characters of the infinite symmetric group, i.e., the invariant measures on the path state of the Young graph (see Fig.~1), have a nice parametrization, and this is a deep result; however, for the graph of unordered pairs (see Fig.~2) there is no nice parametrization. We emphasize that the presentation of  $\Inv(\Gamma)$ as a projective limit of simplices relies essentially on the approximation, i.\,e., on the structure of the Markov compactum (graph). Obviously, the answer to the stated question also depends on the approximation. The fact is that we can change the approximation without changing the stock of invariant measures, which is determined only by the tail equivalence relation. The dependence of our answers on the approximation will be discussed later (see the remark on the lacunary isomorphism theorem in the last section). But since in actual problems the approximation is explicit already in the setting of the problem, the answer should also be stated in its terms. See examples below.

\subsection{Geometric formulations}

We will recall some well known geometric formulations, since the language of convex geometry is convenient and illustrative in this context.

{\rm1.} The set of all Borel probability measures on a separable compact set invariant under the action of a countable group (or equivalence relation) is a simplex (= Choquet simplex, see \cite{F}), i.\,e.,\ a separable affine compact set in the weak topology whose any point has a unique decomposition into an integral with respect to a measure on the set of extreme points.\footnote{Choquet's theorem on the decomposition of points of a convex compact set into an integral with respect to a probability measure on the set of extreme points is a strengthening, not very difficult, of the previous fundamental Krein--Milman theorem saying that a convex affine compact set is the weak closure of the set of convex combinations of extreme points.}
The set of ergodic measures is the Choquet boundary, i.\,e., the set of extreme points, of this simplex; it is always a
$G_{\delta}$ set.
\medskip

{\rm2.} Terminology (somewhat less than perfect): a Choquet simplex is called a Poulsen simplex
\cite{Pou} if its Choquet boundary is weakly dense in it, and it is called a Bauer simplex if the boundary is closed. Cases intermediate between these two ones are possible.

\medskip
{\rm3.} A projective limit of simplices (see below) is a Poulsen simplex if and only if for any $n$ the union of the projections of the vertex sets of the simplices with greater numbers to the $n$th simplex is dense. The universality of a Poulsen simplex was observed and proved much later \cite{Li,Lu}:

\begin{statement}
All separable Poulsen simplices are topologically isomorphic as affine compacta; this unique, up to isomorphism, simplex is universal in the sense of model theory.\footnote{That is, every separable simplex can be mapped injectively into the Poulsen simplex, and an isomorphism of any two isomorphic faces of the Poulsen simplex can be extended to an automorphism of the whole simplex.}
\end{statement}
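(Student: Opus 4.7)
The plan is to prove both uniqueness and universality by a single back-and-forth construction, modeled on the classical Lindenstrauss--Olsen--Sternfeld approach to the Poulsen simplex. First I would isolate the key technical ingredient: a \emph{face extension lemma} saying that in any separable Poulsen simplex $S$, if $F\subset S$ is a finite-dimensional face with extreme points $e_1,\dots,e_k$ (which are necessarily extreme in $S$, by the standard property that an extreme point of a face of a Choquet simplex is extreme in the whole simplex), and if we prescribe a formal affine extension of $F$ by one more abstract extreme point $e'_{k+1}$ together with target barycentric coordinates for some auxiliary point, then for any $\varepsilon>0$ there exists a genuine extreme point $e_{k+1}\in\operatorname{ext}S$ for which the enlarged face $\operatorname{conv}(F\cup\{e_{k+1}\})$ matches the abstract model to within $\varepsilon$ in the Hausdorff distance. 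Existence of such an $e_{k+1}$ is exactly what the Poulsen property -- density of $\operatorname{ext}S$ in $S$ -- purchases, combined with continuity of the affine structure under small perturbations of the vertex set.

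Next I would run the back-and-forth between two separable Poulsen simplices $S_1,S_2$. Fix countable dense sequences $(x_n)\subset\operatorname{ext}S_1$ and $(y_n)\subset\operatorname{ext}S_2$, and build inductively finite sets $E_n\subset\operatorname{ext}S_1$, $F_n\subset\operatorname{ext}S_2$ with bijections $\varphi_n\colon E_n\to F_n$ satisfying: (i) $\{x_1,\dots,x_n\}\subset E_{2n}$ and $\{y_1,\dots,y_n\}\subset F_{2n+1}$, so the limit has dense range on both sides; (ii) $\varphi_{n+1}$ extends $\varphi_n$; (iii) the affine extension $\widehat\varphi_n\colon\operatorname{conv}E_n\to\operatorname{conv}F_n$ is $\varepsilon_n$-close to an affine homeomorphism between finite-dimensional sub-faces of $S_1$ and $S_2$, with $\sum\varepsilon_n<\infty$. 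The face extension lemma, applied on alternating sides, produces the new extreme point at each step. Uniform continuity of affine maps between compact convex sets then lets the sequence $\widehat\varphi_n$ close up to a well-defined affine homeomorphism $S_1\to S_2$.

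For the universality claim, I would use the same machinery in a one-sided form. Given a separable Choquet simplex $T$, present it as an inverse limit of finite-dimensional simplices $T=\varprojlim T_n$ (possible since $T$ is metrizable), and embed $T_1\hookrightarrow T_2\hookrightarrow\cdots$ as an increasing chain of faces of the Poulsen simplex $P$, using the face extension lemma at each step to realize $T_{n+1}$ as a face containing the previously embedded copy of $T_n$. The closure of the union is a face of $P$ affinely homeomorphic to $T$. The statement that an isomorphism between two isomorphic faces of $P$ extends to an automorphism of $P$ is then exactly the same back-and-forth as above, started with the given partial isomorphism in place of the empty map.

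The main obstacle is the quantitative control inside the face extension lemma: one must choose $e_{k+1}$ not merely close to a prescribed location, but so that the barycentric coordinates and the entire affine structure of $\operatorname{conv}(F\cup\{e_{k+1}\})$ are genuinely close, \emph{uniformly}, to those of the abstract target. This requires combining the Poulsen density of $\operatorname{ext}S$ with a stability estimate showing that a small perturbation of the extreme points of a finite-dimensional face induces a small perturbation of the affine identification with the standard simplex of the appropriate dimension -- and that these perturbations can be summed telescopically without destroying the limit. Once this quantitative lemma is in hand, both the uniqueness and the universality statements follow from essentially routine bookkeeping in the back-and-forth.
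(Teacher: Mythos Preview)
The paper does not give a proof of this proposition; it is stated as a known fact with citations to Lindenstrauss--Olsen--Sternfeld \cite{Li} and to Lusky \cite{Lu}, and the text moves on immediately. Your sketch is essentially the back-and-forth argument of the first of those references, so within this paper there is nothing to compare it against.

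One genuine wrinkle in your outline, however, concerns the universality paragraph. You present $T$ as an inverse limit $T=\varprojlim T_n$ and then propose to ``embed $T_1\hookrightarrow T_2\hookrightarrow\cdots$ as an increasing chain of faces of $P$.'' But an inverse limit furnishes affine surjections $T_{n+1}\to T_n$, not inclusions; there is no canonical way to regard $T_n$ as a face of $T_{n+1}$, and $T$ itself need not be an increasing union of finite-dimensional faces at all (consider a Bauer simplex whose extreme-point set is a Cantor set: it has no finite-dimensional faces except vertices). The embedding $T\hookrightarrow P$ in the cited work is organized differently --- roughly, one builds inside $P$ a compatible inverse system of finite-dimensional faces projecting onto the $T_n$, or equivalently works on the predual side and constructs an isometric embedding of $A(T)$ into $A(P)$ as a direct limit of finite-dimensional pieces. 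Your face-extension lemma is the right engine for both uniqueness and universality, but the bookkeeping in the universality step has to respect the direction of the arrows; as written, that paragraph does not yet describe a construction.
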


One can easily check that every projective limit of simplices arises when studying {\it quasi-invariant measures} on the path space of a graph, or Markov measures with given cotransition probabilities (see above). But in what follows we consider only central measures, i.\,e., take a quite special system of projections in the definition of a projective limit. However, there is no significant difference in the method of investigating the general case compared with the case of central measures. We will return to this question elsewhere.

We add another two simple facts, which follow from definitions.

\medskip
{\rm4.} Every ergodic central measure on a Markov compactum (on the path space of a graph) is a Markov measure with respect to the structure of the Markov compactum (the ergodicity condition is indispensable here).

\medskip
{\rm5.} The tail filtration is semi-homogeneous with respect to every ergodic central measure, which means exactly that almost all conditional measures for every partition $\xi_n$, $n=1,2, \dots$, are uniform.

The metric theory of semi-homogeneous filtrations will be treated in a separate paper.

\subsection{Extremality of points of a projective limit, and ergodicity of Markov measures}

We give a criterion for the ergodicity of a measure in terms of general projective limits of simplices, in other words, a criterion for the extremality of a point of a projective limit of simplices.

Assume that we are given an arbitrary projective limit of simplices
$ \Sigma_1 \leftarrow \Sigma_2 \leftarrow \dots \leftarrow\Sigma_n\leftarrow \Sigma_{n+1}\leftarrow \dots\leftarrow \Sigma_{\infty}$
with affine projections $p_{n,n-1}:\Sigma_n\rightarrow \Sigma_{n-1}$, $n=1,2, \dots$ (the general projection
$p_{m,n}:\Sigma_m\rightarrow \Sigma_n$ is given above).

Consider an element $x_{\infty} \in \Sigma_{\infty}$ of the projective limit; it determines, and is determined by, the sequence of its projections $\{x_n\}_{n= 1,2, \dots}$, $x_n \in \Sigma_n$,  to the finite-dimensional simplices. Fix positive integers
$n<m$ and take the (unique) decomposition of the element $x_m$, regarded as a point of the simplex $\Sigma_m $, into a convex combination of its extreme vertices $e_i^m$:
$$x_m=\sum_i c_m^i \cdot e_i^m, \quad \sum_i c_m^i=1, \quad c_m^i\geq 0;$$
denote by $\mu_m=\{c_m^i\}_i$ the measure on the vertices of $\Sigma_m$ corresponding to this decomposition. Project this measure $\mu_m$ to the simplex $\Sigma_n$, $n<m$, and denote the obtained projection
by $\mu_m^n$; this is a {\it measure on $\Sigma_n$, and thus a random point of $\Sigma_n$}; note that this measure is not in general concentrated on the vertices of the simplex $\Sigma_n$.

\begin{statement}[Extremality of a point of a projective limit of simplices]
A point $x_{\infty}=\{x_n\}_n$ of the limit simplex $\Sigma_{\infty}$ is extreme if and only if the sequence of measures $\mu_n^m$ weakly converges, as $m \to \infty$, to the delta measure $\delta_{x_n}$ for all values of $n$:
\begin{eqnarray*}\mbox{for every } \epsilon>0, \mbox{ for every } n \mbox{ there exists }K=K_{\epsilon,n}\mbox{ such that }\\  \mu_n^m(V_{\epsilon}(\mu_n))>1-\epsilon\quad \mbox{for every }m>K,
\end{eqnarray*}
 where $V_\epsilon(\cdot)$ is the $\epsilon$-neighborhood of a point in the usual (for instance, Euclidean) topology.
\end{statement}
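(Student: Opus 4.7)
The plan is to prove the two implications separately, each time reducing to the finite levels $\Sigma_m$ and invoking two basic tools: uniqueness of the decomposition of a point into extreme vertices in a finite-dimensional simplex, and compactness of the space of probability measures on each $\Sigma_n$.

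\emph{Convergence $\Rightarrow$ extremality.} Suppose, for contradiction, that $x_\infty=\alpha y_\infty+(1-\alpha)z_\infty$ with $\alpha\in(0,1)$ and $y_\infty\ne z_\infty$. Projecting to level $m$ and expanding $y_m,z_m$ in the vertex basis of $\Sigma_m$, uniqueness of the simplex decomposition forces $\mu_m=\alpha\mu_m^{(y)}+(1-\alpha)\mu_m^{(z)}$, where $\mu_m^{(y)},\mu_m^{(z)}$ are the vertex-supported probability measures representing $y_m,z_m$. Pushing both sides forward to $\Sigma_n$ yields $\mu_n^m=\alpha\sigma_m+(1-\alpha)\tau_m$, where $\sigma_m,\tau_m$ are probability measures on $\Sigma_n$ whose barycenters equal $y_n,z_n$ respectively, independently of $m$. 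Extract a subsequence along which $\sigma_m\to\sigma$ and $\tau_m\to\tau$ weakly; passing to the limit in the convex combination and using $\mu_n^m\to\delta_{x_n}$ gives $\alpha\sigma+(1-\alpha)\tau=\delta_{x_n}$, which by positivity forces $\sigma=\tau=\delta_{x_n}$. Weak continuity of the barycenter on the finite-dimensional simplex $\Sigma_n$ then gives $y_n=z_n=x_n$; as this holds for every $n$, we obtain $y_\infty=z_\infty=x_\infty$, contradicting $y_\infty\ne z_\infty$.

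\emph{Extremality $\Rightarrow$ convergence.} Assume $x_\infty$ is extreme but $\mu_n^m\not\to\delta_{x_n}$ for some $n$. By weak compactness on the compact simplex $\Sigma_n$, extract $m_k$ with $\mu_n^{m_k}\to\nu_n\ne\delta_{x_n}$, and then a diagonal extraction produces a further subsequence (still denoted $m_k$) along which $\mu_{n'}^{m_k}\to\nu_{n'}$ for every $n'$. These limits are automatically compatible under the continuous projections $p_{n'',n'}$, so the Kolmogorov--Bochner projective-limit theorem for compact metric inverse systems yields a probability measure $\nu$ on $\Sigma_\infty$ whose pushforward to each $\Sigma_{n'}$ is $\nu_{n'}$. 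Weak continuity of the barycenter at each finite level shows that $\nu$ represents $x_\infty$; since $\nu_n\ne\delta_{x_n}$, we have $\nu\ne\delta_{x_\infty}$. A standard Hahn--Banach separation of $x_\infty$ from a support point of $\nu$ distinct from $x_\infty$ then produces a genuine nontrivial convex decomposition of $x_\infty$, contradicting extremality.

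\emph{Main obstacle.} The delicate step is the projective-limit assembly of $\nu$ from the weak limits $\nu_{n'}$: one must verify both that the family $\{\nu_{n'}\}$ is compatible under all projections and that the resulting measure genuinely lives on $\Sigma_\infty\subset\prod_n\Sigma_n$ with barycenter $x_\infty$ at every level. Once $\nu$ is in hand, the remaining ingredients---positivity in the passage to the limit, continuity of barycenters in finite dimension, and the Hahn--Banach step converting a non-delta representing measure into a bona fide nontrivial convex combination---are standard facts in the locally convex compact setting.
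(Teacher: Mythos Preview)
Your argument is correct. It differs from the paper's own proof mainly in the level of machinery invoked. The paper disposes of the proposition in one sentence: take the (unique) Choquet representing measure of $x_\infty$ on $\mathrm{Ex}(\Sigma_\infty)$, project it to each $\Sigma_n$, and observe that these projections coincide with the weak limits of $\mu_n^m$; then $x_\infty$ is extreme precisely when that representing measure is $\delta_{x_\infty}$, i.e., when every projection is a delta measure. In other words, the paper starts from the global Choquet decomposition and reads off the finite-level consequences by continuity.

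You instead build the representing measure by hand from the subsequential limits $\nu_{n'}$ via a projective-limit (Kolmogorov--Bochner) argument, and for the converse you argue directly with a hypothetical two-term convex decomposition. This is more self-contained---you never invoke Choquet's theorem or the simplex property of $\Sigma_\infty$---at the price of the diagonal extraction and the compatibility check you flag as the ``main obstacle.'' The paper's route is shorter and more conceptual but leans on the Choquet machinery as a black box; yours is longer but would survive in settings where one does not want to assume that machinery. One small remark: your final ``Hahn--Banach'' step is really just the elementary fact that a non-delta probability measure with barycenter $x_\infty$ can be split along a separating affine functional into two pieces with distinct barycenters; calling it Hahn--Banach is fine but slightly overstates what is needed.
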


It suffices to use the continuity of the decomposition of an arbitrary point
$x_{\infty}$ into extreme points in the projective limit topology, and project this decomposition to the finite-dimensional simplices; then for extreme points, and only for them, the sequence of projections must converge to a delta measure.

One can easily rephrase this criterion for our case $\Sigma_{\infty}=\Inv(\Gamma)=\Inv(\cal X)$.
Now it is convenient to regard the coordinates (projections) of a central measure
$\mu_{\infty}$ not as points of finite-dimensional simplices, but as measures  $\{\mu_n\}_n$ on their vertices
 (which is, of course, the same thing). Then the measures $\mu_m^n$
 should be regarded as measures on probability vectors indexed by the vertices of the simplex, and the measure
$\mu$ on the Markov compactum $\cal X$ (or on $T(\Gamma)$), as a point of the limit simplex $\Inv$. The criterion then says that $\mu$ is an ergodic measure (i.\,e., an extreme point of $\Inv$) if and only if the sequence of measures
$\mu_m^n$ (on the set of probability measures on the vertices of the simplex $\Sigma_n$) weakly converges as $m \to \infty$
to the measure $\mu_n$ (regarded as a measure on the vertices of $\Sigma_n$) for all $n$.

In probabilistic terms, our assertion is a topological version of the theorem on convergence of martingales in measure and has a very simple form: for every $n$, the conditional distribution of the coordinate $x_{n}$ given that the coordinate
 $x_m$, $m>n$, is fixed converges in probability to the unconditional distribution of  $x_n$ as $m\to \infty$.

According to this proposition, in order to find the finite-dimensional projections of ergodic measures, one should enumerate all delta measures that are weak limits of measures $\mu_n^m$ as $m\to \infty$. But, of course, this method is inefficient and tautological. The more efficient {\it ergodic method} requires, in order to be justified, a strengthening of this proposition, namely, replacing convergence in measure with convergence almost everywhere, i.\,e., the individual ergodic theorem, or pointwise convergence of martingales (see \cite{V74,V-K}). The point of this paper is the enhancement of the ergodic method via a new type of convergence of martingales.

\section{The internal metric and the root norm on projective limits of simplices}

\subsection{Metrics on finite sets and the root norm}

We begin with the following simple remark, which however plays a fundamental role. Namely, we give an elementary definition of the Kantorovich metric in the simplest finite-dimensional case in a purely geometric framework. Consider a finite set
$E=\{e_i\}$ and construct the finite-dimensional simplex $\Sigma(E)$ of formal convex combinations of points of this set. Assume that on the set $E$, i.\,e., on the set of vertices
$\{e_i\}$ of the simplex $\Sigma(E)$, there is a metric $\rho$. This metric can be extended to each of the edges of $\Sigma(E)$ as the Euclidean metric such that the length of the edge is equal to the distance between its endpoints. Is there
a natural way to extend it to  higher-dimensional faces and to the whole simplex, and what is the stock of such extensions?

\begin{theorem}[Definition of the extension of a metric]
There exists a functorial extension  $\bar \rho$ of a metric $\rho$ defined on the vertex set of a simplex to the whole simplex. We mean that the extension procedure described below is a functor from the category of finite metric spaces to the category of affine simplices of a real vector space with a metric.

This extension is nothing but the Kantorovich (or transportation) metric on the simplex; it determines a norm in the affine hull of the simplex (the Kantorovich--Rubinshtein norm), which is maximal in the class of all possible extensions of the metric from the set of vertices to the affine hull.
\end{theorem}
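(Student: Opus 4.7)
The plan is to make $\bar\rho$ explicit as the Kantorovich (transportation) distance. For $\mu=\sum_i \mu_i \delta_{e_i}$ and $\nu=\sum_j \nu_j \delta_{e_j}$ in $\Sigma(E)$, define
$$\bar\rho(\mu,\nu)=\min_{\pi}\sum_{i,j}\rho(e_i,e_j)\,\pi_{ij},$$
where the minimum ranges over couplings $\pi=(\pi_{ij})$ with $\pi_{ij}\ge 0$, $\sum_j\pi_{ij}=\mu_i$, and $\sum_i\pi_{ij}=\nu_j$; it is attained because the coupling polytope is a nonempty compact polyhedron. This gives a symmetric, continuous function on $\Sigma(E)\times\Sigma(E)$ that reduces to $\rho(e_i,e_j)$ on pairs of delta measures, since the only coupling of $\delta_{e_i}$ with $\delta_{e_j}$ is the unit atom at $(i,j)$. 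A direct computation shows that along the edge from $\delta_{e_i}$ to $\delta_{e_j}$ the induced metric is $|t-s|\rho(e_i,e_j)$, matching the Euclidean edge length prescribed in the statement.

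I would next verify the remaining metric axioms and functoriality. Non-degeneracy: if $\mu\ne\nu$ then there is an index $i$ with $\mu_i>\nu_i$, so any coupling must move the excess $\mu_i-\nu_i$ to indices $j\ne i$, contributing strictly positive cost because $\rho(e_i,e_j)>0$. The triangle inequality follows from the standard gluing trick: given optimal couplings of $(\mu,\nu)$ and of $(\nu,\tau)$, build a joint three-index measure whose two-index projections recover them (by disintegrating through $\nu$), and integrate the pointwise inequality $\rho(e_i,e_k)\le \rho(e_i,e_j)+\rho(e_j,e_k)$ against this joint law. For functoriality, a $1$-Lipschitz map $f:(E,\rho)\to(E',\rho')$ extends uniquely to the affine map $f_*:\Sigma(E)\to\Sigma(E')$ sending $\delta_{e_i}$ to $\delta_{f(e_i)}$; the pushforward of any coupling of $(\mu,\nu)$ is a coupling of $(f_*\mu,f_*\nu)$ whose cost can only decrease, which makes $f_*$ a $1$-Lipschitz affine map in the extended metrics, establishing the functorial property.

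The last step, and the one I expect to be the most delicate, is the maximality claim, because one must first fix the class of ``admissible'' extensions. The natural choice is the class of translation-invariant metrics on the affine hull, equivalently of seminorms $N$ on the hyperplane of signed measures of total mass zero with $N(\delta_{e_i}-\delta_{e_j})\le \rho(e_i,e_j)$ for all $i,j$. Given such an $N$ and any $\mu,\nu\in\Sigma(E)$, fix an optimal coupling $\pi^*$ for $(\mu,\nu)$ and write
$$\mu-\nu=\sum_{i,j}\pi^*_{ij}(\delta_{e_i}-\delta_{e_j});$$
the triangle inequality for $N$ together with the hypothesis on elementary differences then gives $N(\mu-\nu)\le\sum_{i,j}\pi^*_{ij}\rho(e_i,e_j)=\bar\rho(\mu,\nu)$. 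Conversely, $\bar\rho$ itself is translation-invariant (by a diagonal-coupling argument, or directly from Kantorovich--Rubinshtein duality $\bar\rho(\mu,\nu)=\sup\{\int f\,d\mu-\int f\,d\nu:\operatorname{Lip}(f)\le 1\}$) and realizes $\rho$ on vertex pairs, hence attains the upper bound, so the Kantorovich--Rubinshtein norm is the unique maximal element. The only genuine subtlety is the specification of the admissibility class: without translation invariance one can inflate the metric by rescaling delta measures, and pinning down this hypothesis is what makes the maximality statement meaningful.
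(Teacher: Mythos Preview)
Your proof is correct and follows essentially the same route as the paper: both define $\bar\rho$ by the transportation formula, check that it restricts to $\rho$ on vertices, pass to the Kantorovich--Rubinshtein norm on the affine hull, and derive maximality from the fact that any difference $\mu-\nu$ decomposes as a combination of the ``simple roots'' $\delta_{e_i}-\delta_{e_j}$ with total weight $\bar\rho(\mu,\nu)$. The paper phrases this last point geometrically (the unit ball of the KR norm is the convex hull of the normalized simple roots, so any competing norm whose unit ball contains those generators must be smaller), whereas you phrase it analytically via an optimal coupling; these are the same observation. Your write-up is in fact more complete than the paper's: you supply the metric axioms, make the functoriality explicit through $1$-Lipschitz pushforwards, and are careful to pin down the admissibility class (translation-invariant extensions, i.e., seminorms on the mass-zero hyperplane), a point the paper leaves implicit.
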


 \begin{proof}
The extension of the metric and the norm are defined as follows.

With every point $x\in \Sigma$ of the simplex we can associate the unique probability measure  $\nu_x$ on the set of vertices  $e_i$, $i=1,2,\dots, n$, of $\Sigma$ whose barycenter coincides with $x$:
$$x=\sum_i c_ie_i;  \quad \nu_x=\{c_i \delta_{e_i}\}; \quad c_i\geq 0,\quad \sum_i c_i=1.$$
Consider the Kantorovich distance $k_{\rho}$ between two measures $\nu_x$ and $\nu_y$ corresponding to points $x$ and
$y$ of the simplex; it is declared to be the extension $\bar\rho$ of the metric $\rho$ to the whole simplex. We give the definition of the Kantorovich metric in our notation:
$$\bar\rho (x, y) \equiv k_\rho(\nu_x, \nu_y)=\min_{\psi}\left\{\sum_{i,j} \psi_{i,j}\rho(e_i,e_j): \psi=\{\psi_{i,j}\}, \sum_j \psi_{ij}=c_i, \sum_i \psi_{ij}=d_j\right\}.$$

One can easily see that on the vertex set of $\Sigma$, the metric $\bar \rho$ coincides with the original metric: $\bar\rho(e_i,e_j)=\rho(e_i,e_j)$. On the edges of $\Sigma$, it coincides with the usual Euclidean metric with given distances between the endpoints of edges. However, on higher-dimensional faces, it depends substantially on the original metric on the vertices. It is convenient to embed the simplex into its affine hull and choose the sum of its vertices as the origin of the linear space.
Then the metric on the simplex determines a (Kantorovich--Rubinshtein) norm on the affine hull, which is given by the formula
$$\|c-c'\|=k_r(c,c'), \quad \|\lambda c\|= |\lambda|\|c\|,$$
where $c,c'$ are two points of the cone spanned by the simplex, and this norm is well defined since it does not depend on the representation of an element as the difference of points of the cone.

The maximality of this norm in the class of all norms on the affine hull for which $\|\delta_x - \delta_y\|=\rho(x,y)$
follows from the fact that the unit ball in this norm is the convex hull of the elements of the form
$\delta_x -\delta_y$ (the simple roots, see below).\footnote{Maximality is not the only property of the Kantorovich metric that singles it out from the set of all possible extensions of a metric from the vertex set of a simplex to the whole simplex; another such property is used in Proposition~5.}
 \end{proof}

Thus there is a finitely parametrized family of norms in $\mathbb{R}^n$ each of which is determined by its values on the differences of coordinate vectors. This  is exactly the family of Kantorovich metrics and Kantorovich--Rubinshtein norms in the {\it simplest (finite) case of the transportation problem}. The Kantorovich--Rubinshtein norm and its dual (Lipschitz) norm have been studied in many papers in the infinite-dimensional, continual, case; however, the geometry of balls in the above norms (i.\,e., in the finite-dimensional spaces $\mathbb{R}^n$) is not less interesting than the geometry of balls and norms in $l^{p}$ or other traditional norms. Strange as it may seem,
these norms are not mentioned in textbooks on optimization theory or convex geometry, as far as the author knows. In this relation, see the paper
 \cite{VPM} devoted to an entirely different subject.

The unit balls in these metrics are very interesting convex centrally symmetric polyhedra. In the dimension
$d=2$, this ball is an arbitrary  centrally symmetric hexagon, i.e., the norm is hexagonal; for  $d=3$, it is a polyhedron combinatorially equivalent to a cube. The general case is more complicated and, apparently, has not been studied.
{\it The metric $\bar \rho$ on the simplex $\Sigma$ can also be called the root metric corresponding to the metric $\rho$
on the vertices of $\Sigma$}. If $\rho=\delta_{x,y}=1$ for $x\neq y $, this is the root metric in the sense of the Lie algebra
 ${\widehat A}_n$.\footnote{This norm is related to the theory of Lie algebras in the following way. Assume that our simplex lies in the Cartan subalgebra of the Lie algebra ${\hat A}_n$ and is generated by its simple roots, on which the delta metric is considered. Then the unit ball in this norm is the convex hull of all positive and negative simple roots; it is natural to call it the {\it root norm}. The root norm is invariant under the Weyl group; has it been considered in detail (see \cite{Vin})?}

\begin{remark}{\rm
The definition immediately implies an important monotonicity property of the metrics under consideration:
$$\bar\rho_{n-1} (p_{n,n-1}x, p_{n,n-1}y)\leq \bar \rho_n (x,y), \quad x,y \in \Sigma_n.$$
}\end{remark}

\subsection{Definition of the internal metric}

We pass to an inductive construction of our key notion, that of the {\it internal metric} on projective limits of simplices and, in particular, on the simplex of invariant measures of a Markov chain. For this we iterate the Kantorovich metric with growing dimension. Assume that we are given a projective limit  $\lim_n \{\Sigma_n, p_{n,m}\}$ of simplices.

\begin{definition}
Fix an arbitrary metric $\rho_k$ on the vertex set of  the simplex $\Sigma_k$ for some $k\geq 1$
and its extension to the whole simplex $\Sigma_k$, which we will now denote by the same symbol
$\rho_k$.  We define metrics on all the subsequent simplices $\Sigma_n$, $n\geq k$, by induction as follows. By the definition of a projective limit, the vertices of  $\Sigma_{n+1}$, $n\geq k$, are projected to some points of the previous simplex
$\Sigma_n$, on which the metric is already defined. Take it as a metric on the vertices of
 $\Sigma_{n+1}$ and extend it as described above to a metric on the whole simplex $\Sigma_{n+1}$, which will also be denoted by $\rho_{n+1}$.
\end{definition}

In what follows, we assume that the original metric $\rho_k$ is fixed and denote the iterated internal metrics by
$\{\rho_n\}_{n=1}^{\infty}$; below we will show that the key property (standardness) does not depend on the choice of this metric.\footnote{Digressing for a moment from the main line, observe that the internal metric defined above has the following interpretation: it is a metric on the space of graded finite-dimensional modules, since this is exactly what the vertices of a Bratteli diagram are. This interpretation is of importance for algebraic applications. It would be interesting to compare it with other metrics on the spaces of modules of algebras if they exist.}

We have obtained a family of metrics, nondecreasing with $n$, on each of the simplices and, in particular, on the collection of all their vertices. We will show how to extend them to the disjoint union of the vertices of all simplices. First we define the distance between a vertex $x \in \Sigma_n$ and a vertex $f\in\Sigma_{n+1}$
as the distance between $x$ and the projection of $f$ to $\Sigma_n$   in the internal metric on $\Sigma_n$. Further,
the distance between vertices $e \in \Sigma_n$ and $f \in \Sigma_m$, $m>n$, is defined as the minimum of the sums of the distances $\rho(e_i,e_{i+1})$ over all pairs of vertices of neighboring levels for all chains of vertices $e=e_n,e_{n+1}, \dots, e_{m-1},e_m=f$ of intermediate levels with final point $f$ and initial point $e$. One could also define a metric on the whole union of the simplices, but we will not need it.

The definition of the internal metric on the vertices of a simplex allows us to translate it to the vertices of a graph and to the states of a Markov chain; by Remark~1, every vertex of the simplex is associated with only one vertex of the graph, hence the image of the metric on the vertices of the graph is a metric (rather than a semimetric).

\begin{remark}{\rm It is natural to give a general definition of sequences of internal metrics (or semimetrics) not for the vertices of simplices or graphs, but for the space of sequences of vertices (edges) of a graph, i.\,e., for the path space of the graph or the trajectory space of the Markov chain. This can be done by the universal trick of ``translating a semimetric'' similar to that described above. The iterations will result in a semimetric on the space of infinite paths, which can be quotiened to obtain a metric on the space of central measures, or, more exactly, on the space of disjoint Borel (but, in general, nonclosed) supports of ergodic central measures. Since in many applications it suffices to regard internal metrics as already defined and compactify the union of the vertex sets, we postpone the general definition till another paper.}
\end{remark}

\subsection{Independence on the original metric}

The fact that the standardness of an extreme point does not depend on the original metric used to construct the internal metric is implied by the following result.

\begin{statement}
Let $S$ be an arbitrary finite set, and let $\rho, \rho'$ be two metrics on $S$ such that
$$\rho (x,y)\leq r\cdot \rho'(x,y)$$ for all $x,y \in S$, where $r$ is a positive constant. Then the corresponding Kantorovich metrics on the simplex
$\Sigma(S)$ of measures satisfy the similar inequality: $$r_{\rho}(\mu_1, \mu_2) \leq r\cdot r_{\rho'}(\mu_1, \mu_2).$$
\end{statement}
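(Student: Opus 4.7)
The plan is to exploit the variational (transportation) definition of the Kantorovich metric, noting that the admissible set of couplings depends only on the marginals $\mu_1, \mu_2$ and not on the metric used on $S$. Thus an optimal transport plan for $\rho'$ is automatically an admissible plan for $\rho$, and we bound its $\rho$-cost using the pointwise comparison $\rho \leq r \cdot \rho'$.

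More precisely, I would recall the formula from Theorem~1,
\[
r_{\rho}(\mu_1,\mu_2) = \min_{\psi} \left\{ \sum_{i,j} \psi_{i,j}\, \rho(e_i,e_j) : \psi \text{ has marginals } \mu_1,\mu_2 \right\},
\]
and the analogous formula for $\rho'$. Let $\psi^{*}$ denote a minimizing coupling for $r_{\rho'}(\mu_1,\mu_2)$, so that $\sum_{i,j}\psi^{*}_{i,j}\,\rho'(e_i,e_j) = r_{\rho'}(\mu_1,\mu_2)$.

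Since $\psi^{*}$ has the prescribed marginals, it is admissible in the variational problem defining $r_{\rho}$. Using the assumed pointwise inequality $\rho(e_i,e_j)\leq r\cdot \rho'(e_i,e_j)$ together with the nonnegativity of the entries $\psi^{*}_{i,j}\geq 0$, I would chain the estimates
\[
r_{\rho}(\mu_1,\mu_2) \;\leq\; \sum_{i,j} \psi^{*}_{i,j}\,\rho(e_i,e_j) \;\leq\; r\sum_{i,j}\psi^{*}_{i,j}\,\rho'(e_i,e_j) \;=\; r\cdot r_{\rho'}(\mu_1,\mu_2),
\]
which is the desired bound.

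There is essentially no obstacle here: once the transportation formulation is in hand, the argument is one line, because the constraint set is independent of the cost and the cost is linear in $\psi$. The only thing worth flagging in the write-up is the observation that the admissibility of $\psi^{*}$ for the $\rho$-problem is precisely what decouples the two minimizations, so that a pointwise comparison of cost functions translates directly into a comparison of optima with the same constant $r$.
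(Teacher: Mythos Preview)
Your argument is correct. You work on the primal side of the transportation problem: fix an optimal coupling $\psi^*$ for $\rho'$, observe that the feasible set of couplings depends only on the marginals, and push the pointwise inequality $\rho\le r\rho'$ through the linear objective. The paper instead argues on the dual side, invoking the Kantorovich--Rubinshtein representation $r_\rho(\mu_1,\mu_2)=\sup_{u\in\mathrm{Lip}_{\rho,1}}\int u\,d(\mu_1-\mu_2)$ and noting that $\rho\le r\rho'$ forces the inclusion of unit balls $\mathrm{Lip}_{\rho,1}\subset r\cdot\mathrm{Lip}_{\rho',1}$, whence the supremum inequality. Your route is slightly more self-contained, since it uses only the definition given in Theorem~1 and avoids appealing to the duality theorem; the paper's route, on the other hand, makes transparent that the comparison is really an inclusion of norm balls in the dual space, which fits naturally with the surrounding discussion of the Kantorovich--Rubinshtein norm.
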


\begin{proof}
We make use of the duality theorem that presents the value of the Kantorovich metric as the supremum over the unit ball of the integrals of Lipschitz functions with respect to the difference of the measures:
$$r_{\rho}(\mu_1, \mu_2)=\sup_{u \in \mbox{Lip}_{\rho,1}} \int_S u(x)d(\mu_1-\mu_2).$$
Obviously, the inequality for the metrics implies the following relation between the unit balls of the spaces of Lipschitz functions:
$$\mbox{Lip}_{\rho,1} \subset r\cdot \mbox{Lip}_{\rho',1},$$
and the required inequality follows.
\end{proof}

\begin{corollary}
The asymptotic (with respect to the dimension) properties of the internal metrics, such as convergence of sequences etc., do not depend on the original metric.
\end{corollary}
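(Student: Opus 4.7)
The plan is to leverage Proposition~5 in a straightforward bi-Lipschitz induction on the level $n$. Let $\rho_k$ and $\rho'_k$ be two choices of initial metric on the vertex set of $\Sigma_k$. Since this vertex set is finite, there exists a constant $r>0$ (and, symmetrically, an $r'>0$) such that $\rho_k(x,y)\le r\cdot\rho'_k(x,y)$ for all vertices $x,y$. By the construction of the Kantorovich extension recalled in Theorem~1, this inequality already propagates from the vertices of $\Sigma_k$ to all of $\Sigma_k$: indeed, the hypothesis of Proposition~5 is exactly that the two metrics on the finite vertex set are comparable, and its conclusion is that the Kantorovich extensions are comparable with the same constant.

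Next, I would proceed by induction: assuming that $\rho_n(x,y)\le r\cdot\rho'_n(x,y)$ for all $x,y\in\Sigma_n$, I want to show the analogous bound on $\Sigma_{n+1}$. By Definition~2, the metric on the vertex set of $\Sigma_{n+1}$ is defined as the restriction (via the projection $p_{n+1,n}$) of the metric on $\Sigma_n$ to the images of those vertices. Hence the inductive hypothesis immediately gives the comparison $\rho_{n+1}(e,f)\le r\cdot\rho'_{n+1}(e,f)$ on the vertex set of $\Sigma_{n+1}$. A second application of Proposition~5, this time to the finite set of vertices of $\Sigma_{n+1}$, promotes this vertex-level comparison to the full Kantorovich extensions on $\Sigma_{n+1}$, completing the induction. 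Symmetrically, one obtains $\rho'_n\le r'\cdot\rho_n$ for every $n$, with the same $r'$ throughout.

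With the uniform bi-Lipschitz equivalence $r^{-1}\rho'_n\le \rho_n\le r\cdot\rho'_n$ in hand for all $n\ge k$, any asymptotic property that is expressible in purely bi-Lipschitz terms is preserved. In particular, a sequence of vertices (lying in various $\Sigma_n$) is Cauchy for $\{\rho_n\}$ if and only if it is Cauchy for $\{\rho'_n\}$; a family is totally bounded in one system if and only if it is totally bounded in the other; and the completions of the disjoint union of vertex sets with respect to the two systems are canonically homeomorphic. This is precisely the statement of the corollary.

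The only delicate point worth flagging is bookkeeping in the inductive step: one must be careful that ``the metric on the vertices of $\Sigma_{n+1}$'' really is the pullback of the already-extended metric on $\Sigma_n$ (not merely on its vertices), so that the inductive hypothesis applies at the right place before Proposition~5 is invoked a second time. Apart from that, no genuine obstacle arises; the corollary is essentially a one-line consequence of Proposition~5 iterated along the projective tower.
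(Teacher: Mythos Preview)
Your proof is correct and follows exactly the approach the paper intends: the paper's own argument is the single sentence ``by the proposition, inequalities between metrics are preserved under iterations,'' and you have simply spelled out the induction that this sentence encodes. Your care about the inductive bookkeeping (that the vertex metric on $\Sigma_{n+1}$ is the pullback of the already-extended metric on $\Sigma_n$) is apt and matches the construction in Definition~2.
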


Indeed, by the proposition, inequalities between metrics are preserved under iterations.

In the general case, the original metric is defined on the space of paths (finite or not) starting at the zero-level vertex, and successive iterations translate it to paths starting from the first, second, etc.\ level (see Remark~3). The conclusion and the proof that the limit behavior of these metrics does not depend on the original one remain valid in this case, too.

\section{Standardness and uniform compactness}

\subsection{Standard extreme points and standard ergodic measures}

A trivial extreme point of a projective limit of simplices is a sequence  $\{x_n\}$ consisting of extreme points of the pre-limit simplices: $x_n\in \mbox{Ex}(\Sigma_n)$; such points are of no interest even if they exist. On the other hand, some extreme points of projective limits satisfy much stronger concentration conditions than in the extremality condition (Proposition~4).
The case of importance for us is where the sequence of projections $\{x_n\}$ is such that their distances to the set of vertices of the simplex in some metric tend to zero. In order to state exactly what such a convergence means, we must introduce a metric on the simplices of all dimensions according to some common rule.\footnote{This convergence can be universal if the definition is functorial.} Just such a definition was given above: this is the internal metric.

This allows us to divide the set of extreme points, and, in particular, the set of ergodic central measures on a Markov compactum and on the path space of a graph, into two classes.

Because of the importance of this definition, we first state it separately for an arbitrary projective limit of simplices, and then specialize for measures on Markov compacta and  on the path spaces of graphs. It is not difficult to see that these statements express the same property in different terms.\footnote{In this paper, we do not enter into the question of how all these formulations are related to the general definition of a standard filtration, neither to the question why the standardness in the sense of these definitions is a property of a filtration that does not depend on the way in which it is realized as a tail filtration. These questions will be studied in the above-mentioned paper in preparation on the metric and combinatorial theory of filtrations.}

\begin{definition}
{\rm1.} Consider a projective limit of simplices $\lim \{\Sigma_n, p_{m,n}\}\equiv \Sigma_{\infty}$. An extreme point
$x_{\infty}\in \Sigma_{\infty}$ with projections $\{x_n\in \Sigma_n\}$ is called standard if
$\lim_{n\to\infty} \rho_n(x_n, \mbox{\rm Ex}(\Sigma_n))=0$, i.\,e., the distance from $x_{n}$ to the boundary of the simplex in the internal metric tends to zero. Note that the extremality of the point is already implied by this condition. This property does not depend on the choice of the initial metric.

{\rm2.} A central measure $\mu$ on the path space of a graph $\Gamma$ is called standard if there exists a sequence of vertices
$\gamma_n\in\Gamma_n$ such that for every $\epsilon >0$ we have $E\lim \mu_n(V_{\epsilon}(\gamma_n)) =1$, where $V_{\epsilon}(\cdot)$ is a neighborhood in the metric $\rho_n$. This property may be called the asymptotic concentration of measure on paths, or the law of large numbers; it does not depend on the choice of the initial metric.

{\rm3.}  A measure of maximal entropy (central measure) on a Markov compactum $\cal X$ is called standard if it has the following property: the distance in the metric $\rho_n$ between the conditional distribution on
$X_n$ (given a fixed condition on $X_{n+1}$) tends to zero in measure. This property is a strengthening of the theorem on convergence of martingales, since the convergence of measures in the internal metric means the uniformness of the ordinary convergence in the dimension of finite-dimensional distributions; it does not depend on the choice of the initial metric.

Extreme points of a projective limit of simplices ( = ergodic central measures) that are not standard are called nonstandard.

A projective limit of simplices (respectively, a graph, a Markov compactum) is called standard if its all extreme points (ergodic measures) are standard.
 \end{definition}

In the theory of filtrations \cite{V71,94}, the notion of standardness was introduced as a nonobvious generalization of the independence property; for instance, a homogeneous ergodic filtration is the filtration of the $\sigma$-fields of pasts of a sequence of i.\,i.\,d.\  variables (Bernoulli scheme) if and only if it is standard (the standardness criterion from
 \cite{V71}). In the general homogeneous, and even semi-homogeneous, case, there are many nonisomorphic standard filtrations which are quite complicated. But a common property, which makes them akin to independence, is that for all such filtrations there is an ``additional'' basis, i.\,e., a well-structured, with respect to the filtration, family of measurable sets generating the whole $\sigma$-field. In the theory of stationary processes, the standardness of the filtration of pasts is a strengthening of the regularity (or Kolmogorov) property, i.\,e., the triviality of the intersection of the filtrations. It is natural to compare it with Ornstein's very weak Bernoulli property \cite{O}; these two properties are in general position, but akin in the terminology of formulations, the difference being in the metrics used on the spaces of conditional measures.

The standardness condition itself may also be called the {\it concentration in the internal metric}, since the
point is that the projections of the measure are concentrated near some point of the boundary in this metric, or that the paths of the graph for large $n$ lie in a small $\rho$-neighborhood of a single vertex. This property is sometimes called ``the existence of a limit shape,'' but the precise meaning of these words depends on the metric. Finally, this property may, with good reason, be called the generalized law of large numbers, which is especially clear from the second statement. However, one should keep in mind the role of the metric in limit-shape-like theorems.

We have singled out the class of projective limits (graphs, Markov compacta) for which all ergodic central measures are standard, and we arrive at the following fundamental problem.

\begin{problem}
Describe the standard projective limits of simplices (standard graphs, standard Markov compacta), e.g., in terms of their definition via the matrices $\{M_n\}$ or in other terms. The asymptotic nature of this problem (in the sense that the answer does not change if we change finitely many initial matrices) allows one to hope only for sufficient conditions for standardness.
 \end{problem}

Below we will see that a number of classical examples (the Pascal graph, the Young graph, random walks on groups, etc.) turn out to be standard. At the same time there exist nonstandard examples, and this is the general case. The suggested procedure allows one not only to establish the standardness when it holds, but also to obtain a parametrization of the ergodic measures.

\begin{definition}
A sequence $x_n\in\Sigma_{n}$ of vertices of the simplex $\Sigma$ (or a sequence $x_n \in \Gamma_n$ of vertices of the graph $\Gamma$) is called regular if there exists a standard extreme point  $x_{\infty}$ of the projective limit (respectively, a standard central measure on the path state of the graph) for which the sequence of projections to the simplices
  $\Sigma_{n}$ converges with the sequence $x_{n}$ in the internal metric.
\end{definition}

Regular sequences break into classes parametrized by all standard extreme points. One can easily check that the following regularity criterion holds.

\begin{statement}
A sequence $\{x_n\}$ is regular if it is a Cauchy sequence in the internal metric (see the definition at the end of Section~3.2).
\end{statement}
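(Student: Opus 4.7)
The plan is to build the limit standard extreme point $y_\infty$ by completing the tail projections of the Cauchy sequence, then verify that its coordinate sequence $(y_n)$ stays close to $(x_n)$ in the internal metric. The central tool is the monotonicity recorded in Remark~2: each projection $p_{n+1,n}\colon\Sigma_{n+1}\to\Sigma_n$ is $1$-Lipschitz from $(\Sigma_{n+1},\rho_{n+1})$ to $(\Sigma_n,\rho_n)$.

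First I would derive, for any $n<m$, the key estimate $\rho_n(x_n,p_{m,n}(x_m))\le\rho(x_n,x_m)$, where the right-hand side is the extended (chain) distance defined at the end of Section~3.2. Given any admissible chain of vertices $x_n=e_n,e_{n+1},\dots,e_m=x_m$, induction on $m-n$ using the triangle inequality in $\rho_n$ together with iterated monotonicity yields
\[
\rho_n\!\bigl(x_n,\,p_{m,n}(x_m)\bigr)\;\le\;\sum_{i=n}^{m-1}\rho_i\!\bigl(e_i,\,p_{i+1,i}(e_{i+1})\bigr),
\]
and minimizing over admissible chains gives the claim. With this in hand, for each fixed $k$ the sequence $y_n^{(k)}:=p_{n,k}(x_n)$ ($n\ge k$) satisfies $\rho_k(y_n^{(k)},y_m^{(k)})\le\rho(x_n,x_m)$ (again by monotonicity) and is therefore Cauchy in $(\Sigma_k,\rho_k)$. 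Since the Kantorovich metric metrizes the weak topology on the compact simplex $\Sigma_k$, that space is complete, so $y_k:=\lim_n y_n^{(k)}$ exists; continuity of $p_{k+1,k}$ forces $p_{k+1,k}(y_{k+1})=y_k$, and the consistent family $(y_k)$ defines the required $y_\infty\in\Sigma_\infty$.

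Letting $m\to\infty$ in the key estimate and invoking the Cauchy property one obtains $\rho_n(x_n,y_n)\le\sup_{m\ge n}\rho(x_n,x_m)\to 0$. Since each $x_n$ is a vertex of $\Sigma_n$, this gives $\rho_n(y_n,\mbox{Ex}(\Sigma_n))\to 0$, so $y_\infty$ is standard in the sense of part~1 of Definition~3, hence extreme as noted there (via the criterion of Proposition~4); and the same estimate shows that the projections of $y_\infty$ converge to $\{x_n\}$ in the internal metric, establishing the regularity of $\{x_n\}$. The main obstacle is not conceptual but technical: it is the careful extraction of the inequality $\rho_n(x_n,p_{m,n}(x_m))\le\rho(x_n,x_m)$ from the chain definition of the extended metric on the disjoint union of vertices, since that chain infimum mixes the internal metrics at all intermediate levels; once this link is secured, completeness of each $(\Sigma_k,\rho_k)$ and $1$-Lipschitzness of the projections deliver everything else mechanically.
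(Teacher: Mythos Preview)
The paper does not actually prove this proposition: it simply asserts ``One can easily check that the following regularity criterion holds'' and moves on. Your argument supplies precisely the details the paper leaves implicit, and it is correct.

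The skeleton---$1$-Lipschitzness of the projections from Remark~2, the chain estimate $\rho_n(x_n,p_{m,n}(x_m))\le\rho(x_n,x_m)$, completeness of each finite-dimensional $(\Sigma_k,\rho_k)$, consistency of the limits $y_k$, and finally $\rho_n(x_n,y_n)\to 0$ forcing standardness of $y_\infty$---is sound. The step you flag as the main obstacle is in fact routine: writing $q_i:=p_{i,n}(e_i)$ for an admissible chain, the triangle inequality in $\rho_n$ gives $\rho_n(q_n,q_m)\le\sum_i\rho_n(q_i,q_{i+1})$, and each summand is bounded by $\rho_i(e_i,p_{i+1,i}(e_{i+1}))$ via iterated monotonicity, after which the infimum over chains finishes it.

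Two small remarks. First, the extremality criterion you cite is Proposition~3 in the paper's numbering, not Proposition~4; in any case, Definition~3 itself already records that the standardness condition implies extremality, so you can simply appeal to that. Second, your claim that the Kantorovich metric metrizes the weak topology on $\Sigma_k$ is fine here because the underlying vertex set is finite and the topology is just the usual one on a finite-dimensional simplex, so completeness is immediate.
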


\subsection{Uniform compactness}

We begin with a simple lemma (proved as early as in \cite{94}).

 \begin{lemma}
If the number of vertices at the level $\Gamma_{n}$ of a graph $\Gamma$ (respectively, the number of points in the state space $X_{n}$ of a Markov compactum) for all $n$
is bounded by a constant not depending on $n$, then the graph (Markov compactum) is standard, i.\,e., all its central ergodic measures are standard.
 \end{lemma}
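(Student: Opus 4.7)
The plan is to combine the ergodicity criterion of Proposition~4 (which identifies ergodicity with triviality of the tail $\sigma$-field, equivalently with the weak concentration $\mu_m^n\to\delta_{\mu_n}$) with a compactness argument enabled by the uniform bound $|\Gamma_n|\le N$.

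Fix the discrete initial metric $\rho_1(v,v')=1$ for $v\ne v'$ on $\Gamma_1$; by the monotonicity of Remark~2, $\operatorname{diam}(\Sigma_n,\rho_n)\le 1$ for all $n$. Let $\mu$ be an ergodic central measure with projections $\mu_n\in\Sigma_n$. Because $|\Gamma_n|\le N$, by pigeonhole and the compactness of $[0,1]^{N\times N}$, I pass to a subsequence of levels along which $|\Gamma_n|=K$ is constant, the vertices are uniformly labeled $\{1,\dots,K\}$, every pairwise distance $\rho_n(i,j)$ converges to some $\rho_\infty(i,j)$, and each coordinate $\mu_n(i)$ converges to $\mu_\infty(i)$. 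This yields a finite limit metric space $(\Gamma_\infty,\rho_\infty)$ on $K$ labels together with a probability vector $\mu_\infty$ on it. The goal becomes: show that $\mu_\infty$ is concentrated on a single $\rho_\infty$-cluster (a set of mutually $\rho_\infty$-zero-distance vertices). Once this is established, choosing $\gamma_n$ at level $n$ to be any vertex in the pre-image of this cluster yields $\mu_n(V_\epsilon(\gamma_n))\to 1$ for every $\epsilon>0$, which is exactly Definition~4(2).

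I argue the single-cluster concentration by contradiction. Suppose $\mu_\infty$ gives positive mass to two points $v^\ast,w^\ast\in\Gamma_\infty$ with $\rho_\infty(v^\ast,w^\ast)\ge 2\alpha>0$. At pre-limit levels this produces disjoint $A_n,B_n\subset\Gamma_n$ with $\mu_n(A_n),\mu_n(B_n)\ge c$ and $\rho_n(A_n,B_n)\ge\alpha$ for some $c>0$ and all large $n$. By Kantorovich duality one obtains $1$-Lipschitz test functions $f_n\colon\Gamma_n\to\mathbb R$ with $f_n|_{A_n}-f_n|_{B_n}\ge\alpha$; crucially, by the iterative construction of $\rho_n$ such 1-Lipschitz functions arise as cotransition pullbacks of bounded test functions on $\Gamma_1$. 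For any such pullback $h_n$, the Markov property of central measures makes $\{h_n(x_n)\}$ a reverse martingale with respect to the tail filtration; ergodicity (via Proposition~4) forces its tail-$\sigma$-field limit to be constant, so $\operatorname{Var}_{\mu_n}(h_n)\to 0$. But a persistent separation $h_n|_{A_n}-h_n|_{B_n}\ge\alpha$ together with the mass lower bounds $\mu_n(A_n),\mu_n(B_n)\ge c$ forces $\operatorname{Var}_{\mu_n}(h_n)\ge c(\alpha/2)^2>0$, a contradiction.

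The main obstacle is the coordination between the different levels in the fourth step: strictly speaking, Kantorovich duality at each level produces its own separator $f_n$, and one must either select a single ``global'' test function on $\Gamma_1$ whose iterated pullbacks continue to separate $A_n$ from $B_n$, or exploit the finite-dimensional space of test functions on $\Gamma_1$ to extract (by another compactness step) a subsequence along which the separating functions stabilize into a reverse-martingale compatible family. The hypothesis $|\Gamma_n|\le N$ is used in an essential way twice: to extract the convergent labeling of $(\Gamma_n,\rho_n)$ and $\mu_n$, and to confine the test functions to a fixed finite-dimensional space so that the uniform separation cannot be diffused across unboundedly many intermediate vertices. Remark~1 (distinct cotransition rows) ensures that vertices distinguished by $\rho_\infty$ are genuinely different, and independence of the argument from the initial choice of metric follows from Proposition~5.
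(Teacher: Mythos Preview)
Your approach is genuinely different from the paper's and carries an unresolved gap that you yourself flag. The paper does not pass through reverse martingales or Kantorovich duality at all: it argues directly on the internal metric, observing that under the iteration the pairwise distances $\rho_n(v,w)$ between vertices are nonincreasing (Remark~2), that with a bounded vertex count at least one pair must contract by a definite factor at each step unless the projection is vertex-to-vertex, and that ergodicity excludes the remaining degenerate case. Hence the mean (indeed the maximum) distance among vertices tends to zero, and then \emph{every} point of $\Sigma_n$, in particular $\mu_n$, is close to a vertex in $\rho_n$. Standardness is immediate, uniformly over all ergodic central measures. This is a one-line contraction argument; no test functions, no subsequences, no martingale convergence.

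The gap in your route is exactly where you locate it, and it is not cosmetic. A $1$-Lipschitz function $f_n$ on $(\Gamma_n,\rho_n)$ is, by definition of $\rho_n$, a function that factors through the images $p_{n,n-1}(v)\in\Sigma_{n-1}$ and is $1$-Lipschitz there for $\bar\rho_{n-1}$. But a $1$-Lipschitz function on $(\Sigma_{n-1},\bar\rho_{n-1})$ need not be \emph{affine}; only the affine ones are of the form $\nu\mapsto\int g\,d\nu$ with $g$ $1$-Lipschitz on $\Gamma_{n-1}$, and only those iterate down to ``cotransition pullbacks'' from $\Gamma_1$. Thus your separators $f_n$ do not, in general, assemble into a reverse martingale $h_n(x_n)=\mathbb E[g(x_1)\mid x_n]$, and the variance argument has nothing to act on. Your two suggested repairs (a global $g$ on $\Gamma_1$, or a compactness extraction in the finite-dimensional Lipschitz ball over $\Gamma_1$) would only recover the \emph{affine} part of the duality, which controls $\bar\rho_1(p_{n,1}v,p_{n,1}w)$ rather than $\rho_n(v,w)$; since $\rho_n$ can be strictly larger than this level-$1$ projection distance, a persistent $\rho_n$-separation of $A_n$ and $B_n$ need not be visible to any level-$1$ test function. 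There is also a secondary issue: you work along a subsequence, but Definition~4(1)--(2) asks for $\rho_n(\mu_n,\mathrm{Ex}(\Sigma_n))\to 0$ along the full sequence, and monotonicity of $\bar\rho$ under projection does not by itself carry vertex-closeness from $n_k$ to intermediate $n$, since $p_{n_k,n}\gamma_{n_k}$ is typically not a vertex of $\Sigma_n$. The paper's contraction argument avoids both difficulties at once by showing that the whole vertex set collapses in $\rho_n$, so any vertex will do at every level.
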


 \begin{proof}
 Note that the mean distance between points with respect to the iterations of the metric $\rho_n$ tends to zero, since the number of points does not increase and for at least one pair of points at the next step the distance will decrease with a constant factor; moreover, no distance can remain constant by the ergodicity, and the convergence to zero is uniform over all measures on the given compactum.
 \end{proof}

It is interesting what is the minimum growth rate of the number of vertices for which nonstandardness appears.

We will consider the internal metric on the levels $\Gamma_n$ of the graph, or on the state sets
$X_n$ of the Markov chain.

\begin{definition}
We say that the family $\Gamma_n$ is uniformly  compact in the internal metric if for every
 $\epsilon>0$ the number of points in an $\epsilon$-net for $\Gamma_n$ is uniformly bounded in $n$.
\end{definition}

If this condition is satisfied for the sets $\Gamma_n$, i.\,e., for the  vertex sets of the simplices, then it also holds for the family of the simplices themselves endowed with the internal metric.

\begin{corollary}
If there exists a nonstandard extreme point, then the family of the vertex sets is not uniformly compact.
\end{corollary}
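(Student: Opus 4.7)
\medskip\noindent\textbf{Proof plan.} The plan is to establish the contrapositive: if the family $\{\Gamma_n\}$ is uniformly compact in the internal metric, then every extreme point $x_{\infty}=\{x_n\}$ of the projective limit $\Sigma_{\infty}$ is standard, i.e.\ $\rho_n(x_n,\mathrm{Ex}(\Sigma_n))\to 0$. The strategy is to reduce to Lemma~1 by replacing $\Gamma_n$ with a bounded-cardinality proxy coming from an $\epsilon$-net, and to absorb the resulting $\epsilon$-error into the standardness estimate.

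First I would fix $\epsilon>0$ and use uniform compactness to choose, for every $n$, an $\epsilon$-net $N_n\subset\Gamma_n$ with $|N_n|\leq K(\epsilon)$; assigning each vertex of $\Gamma_n$ to a nearest point of $N_n$ partitions $\Gamma_n$ into at most $K(\epsilon)$ clusters of $\rho_n$-diameter at most $2\epsilon$. For a central measure $\mu$ with projections $\mu_n$, let $\tilde\mu_n$ be the ``clustered'' measure on $N_n$ obtained by moving the mass of each cluster onto its chosen center. Because transport inside a cluster costs at most $2\epsilon$, the definition of the Kantorovich metric immediately yields $\rho_n(x_n,\tilde\mu_n)=\rho_n(\mu_n,\tilde\mu_n)\leq 2\epsilon$.

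Second, I would run the contraction argument of Lemma~1 at the cluster level. The monotonicity of the Kantorovich metric under projections (Remark~4) guarantees that the iterated metrics do not grow, and the extremality of $x_{\infty}$ (through the martingale-type criterion of Proposition~4) plays the role of ``ergodicity'' in Lemma~1: it prevents any of the finitely many pairwise distances between cluster centers from remaining bounded below. Since there are at most $K(\epsilon)$ cluster centers at each level, the same mean-distance argument as in Lemma~1 forces $\tilde\mu_n$ to collapse in $\rho_n$ onto a single center $\nu_n\in N_n\subset \Gamma_n$: $\rho_n(\tilde\mu_n,\delta_{\nu_n})\to 0$. The triangle inequality then gives $\rho_n(x_n,\delta_{\nu_n})\leq 3\epsilon$ for all sufficiently large $n$, so $\rho_n(x_n,\mathrm{Ex}(\Sigma_n))\leq 3\epsilon$ eventually; letting $\epsilon\downarrow 0$ and diagonalizing in $n$ establishes standardness of $x_{\infty}$, and contrapositively proves the corollary.

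The main obstacle is that the cluster partitions depend on $n$ and are not canonically transported by the graph projections $p_{n+1,n}$, so the clustered system is not literally a graded graph to which Lemma~1 applies verbatim. I would handle this by passing to a subsequence of levels on which the combinatorial type of the clustered partitions stabilizes (there are only finitely many partition types of a set of size $\leq K(\epsilon)$), and verifying that on that subsequence the projections $p_{n,n-1}$ induce bona fide transition probabilities between cluster centers for which the ergodicity inherited from $x_{\infty}$ still yields strict contraction of the mean Kantorovich distance. This subsequence-plus-diagonal step, rather than any delicate geometric estimate, is where the work lies.
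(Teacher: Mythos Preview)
Your contrapositive strategy---assume uniform compactness, replace each level by an $\epsilon$-net of uniformly bounded cardinality, invoke the contraction mechanism of Lemma~1 on this bounded system, and absorb the $O(\epsilon)$ clustering error via the triangle inequality---is exactly the paper's argument. The paper's entire proof is the single sentence: uniform compactness gives, for every $\epsilon>0$, uniformly bounded $\epsilon$-nets on all levels, ``and it would follow by Lemma~1 that every extreme point is standard.'' So your plan and the paper's coincide; you have simply written out what the paper compresses into one line.

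One caveat on your elaboration. You correctly flag that the clustered system is not literally a graded graph, so Lemma~1 does not apply verbatim; the paper leaves this reduction entirely implicit. But your proposed remedy---passing to a subsequence ``on which the combinatorial type of the clustered partitions stabilizes (there are only finitely many partition types of a set of size $\leq K(\epsilon)$)''---is not well formed: the sets being partitioned are the $\Gamma_n$, which have unbounded cardinality, so there is no finite ``partition type'' to stabilize. What actually carries the argument is simpler and needs no subsequence: the proof of Lemma~1 only uses that (i) the iterated Kantorovich distances are nonincreasing under projection (Remark~2) and (ii) ergodicity forces overlaps that strictly contract the maximum among the finitely many pairwise distances. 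Applying this directly to the at most $K(\epsilon)$ net points at each level drives $\mathrm{diam}_{\rho_n}(N_n)\to 0$, whence $\mathrm{diam}_{\rho_n}(\Gamma_n)\leq 2\epsilon$ eventually, and your final $\epsilon\downarrow 0$ step finishes the proof as you wrote it. Drop the type-stabilization detour and the rest of your outline is sound.
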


Indeed, the uniform compactness would mean that for every $\epsilon>0$ there is a uniformly bounded set of points of
$\epsilon$-nets on all levels, and it would follow by Lemma~1 that every extreme point is standard.

The internal metric can be extended by continuity to the set of standard extreme points (standard ergodic measures).

\begin{theorem}
If the family of simplices $\{\Sigma_n\}_n$ (equivalently, the family of their vertices) is uniformly compact in the internal metric, then the projective limit of these simplices (respectively, the Bratteli diagram, the Markov compactum) is standard. In this case, the limit is a Bauer simplex (= the Choquet boundary is closed), and the internal topology on it coincides with the weak topology. The set of extreme points coincides with the set of limits of regular sequences.
\end{theorem}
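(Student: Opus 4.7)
The plan is to reduce to Lemma~1 by coarsening via $\epsilon$-nets, and to harvest the Bauer property together with the agreement of internal and weak topologies from the resulting uniform concentration. Fix an ergodic central measure $\mu$ with projections $x_n\in\Sigma_n$ and fix $\epsilon>0$. By uniform compactness, choose $\epsilon$-nets $\mathcal{N}_n\subset\Gamma_n$ of cardinality $\leq K_\epsilon$ independent of $n$, and a coloring $\sigma_n\colon\Gamma_n\to\mathcal{N}_n$ sending each vertex to a nearest net point. Consider the pseudometric $\rho_n^\epsilon(\gamma,\gamma'):=\rho_n(\sigma_n(\gamma),\sigma_n(\gamma'))$: it differs from $\rho_n$ by at most $2\epsilon$ additively, and takes at most $K_\epsilon^2$ distinct values per level. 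I would then rerun the mean-distance contraction of Lemma~1 inside $\rho_n^\epsilon$: the bounded number of colors plays the role of bounded $|\Gamma_n|$, and ergodicity of $\mu$ forbids stability of the $\mu$-weighted mean pseudometric for exactly the same reason as in Lemma~1. This forces $\mu_n$ to $\rho_n$-concentrate within $3\epsilon$ of a single net point $\gamma_n^\ast\in\mathcal{N}_n$, so $\rho_n(x_n,\delta_{\gamma_n^\ast})\leq 3\epsilon$ for all large $n$. A diagonal argument in $\epsilon\to 0$ yields $\rho_n(x_n,\mbox{Ex}(\Sigma_n))\to 0$, which is standardness in the sense of Definition~3(1).

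For the topological conclusions: Remark~3 makes $d(\mu,\nu):=\lim_n\rho_n(\mu_n,\nu_n)$ a well-defined metric on $\mbox{Inv}(\Gamma)$ (the limit exists by monotonicity of $\rho_n$ under projections), and uniform compactness makes $(\mbox{Erg}(\Gamma),d)$ totally bounded. Since the iterated Kantorovich metric on each $\Sigma_n$ dominates the weak topology of the finite-dimensional marginals, $d$-convergence entails weak convergence on $\mbox{Erg}(\Gamma)$; the converse follows from $d$-precompactness together with uniqueness of weak limits. Hence the internal and weak topologies coincide on the Choquet boundary, which is therefore weakly closed, i.e., $\mbox{Inv}(\Gamma)$ is a Bauer simplex. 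Finally, every ergodic measure is the internal-metric limit of its own vertex projections (standardness, first paragraph), and any internal-metric Cauchy sequence of vertices is regular by Proposition~5 and converges in $d$ to an ergodic limit by completeness; this gives the claimed bijection with limits of regular sequences.

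The main obstacle I expect is the coarsening step. The nearest-net-point partition $\sigma_n$ need not be compatible with the cotransition probabilities, so the coarsened system is not literally a projective limit of simplices and Lemma~1 does not apply verbatim. The clean remedy is not to construct any quotient system, but to run the mean-distance contraction argument inside the original graph with $\rho_n^\epsilon$ replacing $\rho_n$; the boundedness of colors substitutes for boundedness of levels, and ergodicity of $\mu$ still precludes stability. A telescoping estimate across finitely many intermediate levels may be required to absorb the non-contraction at single steps caused by the pseudometric's incompatibility with projections, so that over bounded blocks of levels the $\mu$-weighted mean of $\rho_n^\epsilon$ contracts by a definite factor and the Lemma~1 mechanism carries through.
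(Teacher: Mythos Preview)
Your approach is essentially the same as the paper's: the paper also reduces standardness to Lemma~1 via $\epsilon$-nets (this is exactly the content of Corollary~2, whose one-line proof reads ``the uniform compactness would mean that for every $\epsilon>0$ there is a uniformly bounded set of points of $\epsilon$-nets on all levels, and it would follow by Lemma~1 that every extreme point is standard''), and then obtains the Bauer property and the coincidence of topologies from the resulting compactness of the boundary in the internal metric together with the continuity of the identity map into the weak topology. Your version is in fact more careful than the paper's: you correctly isolate the obstacle that the nearest-net-point coloring is not compatible with the projections, so Lemma~1 does not apply verbatim to a quotient system --- a point the paper passes over in silence --- and your proposed remedy (run the mean-distance contraction for the pseudometric $\rho_n^\epsilon$ inside the original graph, using the color bound in place of the level bound) is the right way to close that gap.
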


\begin{proof}
By Corollary~2, it follows from the uniform compactness in the internal metric that every extreme point is standard. But every standard extreme point is the limit of a regular sequence of vertices. The completion of the space of (classes of) sequences with respect to the internal metric thus coincides with the set of all extreme points. Therefore, the Choquet boundary is compact. This also implies the coincidence of the topologies, since the identity map of the simplex endowed with the internal topology into the same simplex with the weak topology is, obviously, continuous.
\end{proof}

Briefly speaking, the uniform compactness of a family of simplices in the internal metric means the compactness of the set of extreme points and
the projective limit of simplices in the internal metric.

Note that the set of standard extreme points may be empty, but such a simplex itself may be or not be a Bauer simplex (i.\,e., have a weakly compact Choquet boundary). For instance, there are examples in which the Choquet boundary is closed, and even consists of a single point, but the extreme point is not standard and the simplices do not enjoy the required compactness in the internal topology.

\begin{conclusion}
We suggest a method for solving the problem of enumerating the central measures, which consists in establishing the uniform compactness of the family of the vertices of the simplices, or the vertices of the graph, or the states of the Markov compactum, in the internal metric. If the uniform compactness holds, then the completion of these spaces with respect to this metric is the set of all ergodic invariant measures. In this case, all these measures are standard and satisfy the concentration condition. An explicit parametrization of the set of ergodic measures follows from the construction. This conclusion remains true also for the problem of enumerating measures with given cotransitions (i.\,e., quasi-invariant measures), that is, for describing entrance and exit boundaries, Martin boundaries, etc.
\end{conclusion}

\section{Comments and examples}

\subsection{Dependence on the approximation and lacunary isomorphism}

Recall that, by the lacunary isomorphism theorem (for its homogeneous case, see \cite{94}),
every filtration is lacunary standard, i.\,e., contains a standard subsequence. It follows that if we allow to pass from a given graph (Markov compactum) to the graph (compactum) in which some (possibly, infinitely many) levels are omitted and the multiplicities of the new edges take into account the omitted vertices and edges, then a nonstandard central measure can become standard. We will call this operation a {\it rarefaction}. It changes the internal topology: in the rarefied graph the topology becomes weaker. Thus, in general, the stock of standard extreme points increases. Hence a rarefaction can turn a nonstandard graph into a standard one, if it makes all measures standard simultaneously.\footnote{It is not clear whether there exists a graph for which such a rarefaction is not possible; it is most likely that this will be the case for the universal or random graded graph.} However, in practice this operation is very inefficient, and obtaining a standard graph from a nonstandard one at this price is not reasonable. For example, for filtrations with positive entropy, the required rarefaction grows superexponentially. Moreover, by the essence of the question, it is natural to associate a parametrization of the central measures with the original graph rather than with a rarefied one.

\subsection{Examples of calculations}

Clearly, the practical conclusion from the above considerations is that one should calculate the internal metric on the levels and check whether the uniform compactness holds. This is a purely combinatorial problem, sometimes easy and sometimes not that easy.  In the compact case, the completion with respect to the internal metric provides an explicit parametrization of the extreme points.

\begin{theorem}
The Pascal graph of any dimension (${\Bbb Z}_+^{d+1}$) is standard. The list of ergodic invariant measures is the list of Bernoulli measures parametrized by the points of the unit
 $(d-1)$-dimensional simplex (for $d=2$, this is de Finetti's theorem). The Young graph (see Fig.~1) is also standard.
\end{theorem}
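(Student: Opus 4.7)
The strategy is to apply Theorem~2: once the levels $\Gamma_n$ are shown to be uniformly compact in the internal metric, standardness follows automatically and the Choquet boundary is identified with the metric completion of $\bigsqcup_n \Gamma_n$. Both assertions of Theorem~3 thus reduce to verifying this uniform compactness and then computing the completion explicitly.

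For the Pascal graph $\mathbb{Z}_+^{d+1}$, I would proceed as follows. Label the vertices of level~$n$ by compositions $v=(v_0,\dots,v_d)$ with $\sum_i v_i=n$, and consider the frequency map $\phi_n\colon v\mapsto v/n$ into the probability simplex $\Delta^d$. Starting from the discrete initial metric $\rho_1(e_i,e_j)=1$ for $i\ne j$, the goal is the clean identity
\[
\rho_n(v,v') \;=\; \tfrac{1}{2}\,\|v/n - v'/n\|_1.
\]
The upper bound is proved by induction on~$n$, constructing a transport plan between the cotransition measures $p_{n,n-1}(\delta_v)=\sum_i (v_i/n)\,\delta_{v-e_i}$ and $p_{n,n-1}(\delta_{v'})$ by a ``matching coloured balls'' coupling: for each colour~$i$, pair off $\min(v_i,v_i')$ copies, so that only the symmetric difference needs to be moved, with total mass exactly $\tfrac12\|v-v'\|_1/n$. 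The matching lower bound follows from Kantorovich duality applied to the coordinate functions $f_i(u)=u_i/(n-1)$, which the same inductive coupling certifies are $1$-Lipschitz with respect to $\rho_{n-1}$. A convenient consistency check is that the center-of-mass map $m(\mu)=\sum_v v/n\cdot\mu(v)$ commutes with all projections $p_{n,n-1}$, so the identity is preserved at every level. Once the identity holds, $\phi_n(\Gamma_n)\subset\Delta^d$ is uniformly $\epsilon$-coverable by $O(\epsilon^{-d})$ points independent of~$n$, so the hypothesis of Theorem~2 is met. The completion of $\bigsqcup_n \Gamma_n$ is canonically $\Delta^d$, and the extreme point attached to $p\in\Delta^d$ is identified with the Bernoulli measure $\mu_p$ as follows: any vertex sequence with $v_n/n\to p$ is Cauchy in the internal metric by the identity, hence regular by Proposition~5, and item~2 of Definition~3 then forces the associated central measure to be concentrated on vertices whose frequencies converge to~$p$, which uniquely characterises $\mu_p$ among central measures. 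The standard two-symbol Pascal graph is exactly de Finetti's theorem; general~$d$ is its multinomial analogue.

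For the Young graph the conceptual skeleton is identical, but the combinatorics is considerably more delicate. The analog of the frequency map is the Vershik--Kerov rescaling $\lambda\mapsto\varphi_\lambda$ sending a diagram $\lambda\vdash n$ to its $1/\sqrt n$-rescaled profile, viewed as a point in the compact space of limit profiles parametrised by Thoma sequences $(\alpha,\beta)$ with $\sum_i\alpha_i+\sum_j\beta_j\le 1$. I would show that the internal metric on $\Gamma_n$ is dominated by the uniform norm on rescaled profiles; this requires combining the Young branching rule with the hook-length formula to see that the cotransition measure, weighted by $\dim\mu/\dim\lambda$, concentrates on the hypergeometric neighbourhood of the rescaled profile. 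Uniform compactness then follows from compactness of the Thoma simplex, and Theorem~2 identifies the Choquet boundary with the Thoma simplex.

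The main obstacle is the uniform Lipschitz bound $\rho_n(v,v')\le C\,\|\phi_n(v)-\phi_n(v')\|$ with $C$ independent of~$n$. For Pascal the symmetric-group action at each level is so strong that the bound holds with equality and the induction is clean, although the bookkeeping for unmatched coordinates must be carried out honestly to certify that the ``colour-matching'' coupling is actually optimal. For the Young graph this step is genuinely hard: the cotransition coefficients $\dim\mu/\dim\lambda$ are not uniform on the level, and controlling them under iterated Kantorovich minimisation requires essentially the asymptotic analysis of dimensions of irreducible symmetric-group representations used in~\cite{V-K}. The point of the present approach is precisely that, once uniform compactness has been certified by that analysis, the full parametrisation of the ergodic central measures drops out of Theorem~2 with no further analytic or algebraic input.
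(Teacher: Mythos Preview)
Your overall strategy coincides with the paper's: both reduce Theorem~3 to the uniform-compactness hypothesis of Theorem~2 and then read off the Choquet boundary as the metric completion. The paper itself does not give a proof but only an outline---it refers to \cite{V13} for the Pascal computation, remarks that the internal metric there is the root norm of $\widehat{sl}(d+1,\mathbb{R})$ (which is exactly your identity $\rho_n(v,v')=\tfrac12\|v/n-v'/n\|_1$), and for the Young graph says only that standardness ``implicitly follows from Thoma's theorem,'' while suggesting that a direct proof might proceed by approximating the Young graph by multidimensional Pascal graphs of growing dimension. Your Pascal argument is in fact more detailed than what the paper offers and is essentially correct; one small repair: the individual coordinate functions $u\mapsto u_i/(n-1)$ are $2$-Lipschitz rather than $1$-Lipschitz for $\rho_{n-1}$, so for the lower bound you should test against $f(u)=\sum_{i\in S}u_i/(n-1)$ with $S=\{i:v_i>v_i'\}$, which is genuinely $1$-Lipschitz and yields the matching bound.

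For the Young graph your sketch diverges slightly from the paper's hint (profile analysis versus Pascal approximation), but both are avowedly programmatic. One genuine slip: the Thoma parameters $(\alpha,\beta)$ arise from the $1/n$ scaling of row and column lengths, not from the $1/\sqrt{n}$-rescaled profile, which is the Plancherel limit-shape scaling; conflating these would make the ``frequency map'' land in the wrong compactum. With the correct $1/n$ scaling your outline is aligned with the paper's, and both concede that the hard step---controlling the iterated Kantorovich metric via dimension asymptotics---is not carried out here.
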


The Pascal graph of dimension $d$ is the lattice ${\Bbb Z}_+^d$ graded by the sum of the coordinates; for the ordinary Pascal graph,  $d=2$. The calculation of the internal metric in this case is outlined in \cite{V13}.  In this case, it suffices to use manipulations with binomial or multinomial coefficients. The proof of the standardness gives another proof of the generalized de Finetti theorem. We leave detailed calculations to a detailed paper about examples. A nontrivial fact here is the appearance of the root norm for the Cartan subalgebra of $\widehat{sl}(d+1,\mathbb{R})$ as the norm corresponding to the internal metric.

The standardness of the Young graph implicitly follows from Thoma's theorem (i.e., from the list of characters), but there is every reason to hope that one will be able to check the compactness in the internal metric directly   and thus obtain the first purely combinatorial proof of Thoma's theorem.
Besides formulas for the dimensions (e.g., the hook-length formula), an approximation of the graphs themselves may be useful: the uniform compactness in this case can be obtained from an approximation of the Young graph by multidimensional Pascal graphs of growing dimension. The Thoma parameter ranges over the simplex of two-sided nonnegative converging series with sum at most~$1$.

It is most likely that the standardness holds for a much more general case, which covers all the previous ones, namely, for the graphs that are the Hasse diagrams of distributive lattices with finitely many generators. In particular, the multidimensional Young graphs belong to this class. It is interesting that candidates for the role of parameters in all these cases are the {\it independent frequencies} of certain (minimal infinite) ideals, and the generalized independence announced above is exactly a certain independence of these frequencies. The standardness (i.\,e., the compactness of the set of standard extreme points) implies the compactness of the set of these frequencies. The reason behind the uniform compactness in all these cases (see above) lies in the following structure of the graphs: all vertices preceding any given vertex are close in the internal metric, and this closeness increases with the number of the level containing this vertex.

It is exactly the absence of this property that is a characteristic feature of the known examples of nonstandard graphs. For the graph of unordered pairs (see Fig.~2), one does not know the answers to simple questions related to the internal metric, for instance, what is the growth rate of the $\epsilon$-entropy. All these questions are closely related to the theory of so-called tower of measures \cite{94}.

\medskip
Translated by N.~V.~Tsilevich.

\end{document}